\DeclareMathOperator{\aut}{Aut} 
\DeclareMathOperator{\soc}{soc}
\newcommand{\psl}{\mathrm{PSL}}
\newcommand{\psu}{\mathrm{PSU}}
\newcommand{\g}{\mathrm{G}}
\newtheorem{theorem}{Theorem}[section]
\newtheorem{question}[theorem]{Question}
\newtheorem{proposition}[theorem]{Proposition}
\newtheorem{corollary}[theorem]{Corollary}
\newtheorem{lemma}[theorem]{Lemma}
\theoremstyle{definition}
\newtheorem{definition}[theorem]{Definition}
\newtheorem{remark}[theorem]{Remark}
\numberwithin{equation}{section}
\renewcommand{\footnote}{\endnote}
\newcommand{\ignore}[1]{}\makeglossary
\begin{document}
	\bibliographystyle{amsplain}

\title[On finite groups whose power graph is claw-free]{On finite groups whose power graph is claw-free}
\author{Pallabi Manna}
\address{Pallabi Manna, Harish Chandra Research Institute, Prayagraj - 211019, India}
\email{mannapallabimath001@gmail.com}  
\author{Santanu Mandal}
\address{Santanu Mandal, School of Computing Science and Engineering, VIT Bhopal University, Bhopal-466114, India}
\email{santanu.vumath@gmail.com}
\author{Andrea Lucchini}
\address{A. Lucchini, Universit\`a di Padova, Dipartimento di Matematica ``Tullio Levi-Civita'', Via Trieste 63, 35121 Padova, Italy}
\email{lucchini@math.unipd.it}

\begin{abstract}A graph is called claw-free if it contains no induced subgraph isomorphic to the complete bipartite graph $K_{1, 3}$. The undirected power graph 
	 of a group $G$ has vertices the elements of $G$, with an edge between $g_1$ and $g_2$ if one of the two cyclic subgroups $\langle g_1\rangle, \langle g_2\rangle$ is contained in the other. It is denoted by
$P(G)$. The reduced power graph, denoted by $P^*(G),$ is the subgraph of $P(G)$ induced by the non-identity elements. The main purpose of this paper is to explore the finite groups whose reduced power graph is claw-free. In particular we prove that if $P^*(G)$ is claw-free, then either $G$ is solvable or $G$ is an almost simple group. In the second case the socle of $G$ is isomorphic to $\psl(2,q)$ for suitable choices of $q$.
Finally we prove that if $P^*(G)$ is claw-free, then the order of $G$ is divisible by at most 5 different primes.
\end{abstract}

\keywords{Power graph, claw-free graph, nilpotent groups, solvable groups, simple groups.}
\subjclass{05C25, 20D10, 20D15, 20E32}

\maketitle

\section{Introduction}
There are numerous significant and well-known families of graphs that are likewise claw-free. This family includes line graphs, complement of triangle-free graphs, the graph of the icosahedron, the comparability graphs, and so on. The initial incentive to investigate the characteristics of claw-free graphs seems to have emerged from Beineke's characterization of line graphs in \cite{Beineke}. However, it was primarily in the late 1970s and early 1980s that the graph theory community began to pay attention to the class of claw-free graphs. In this time frame, the matching number of these graphs was noticed, and initial findings on Hamiltonian properties were validated. Still, perhaps more significant were the findings that the independence number can be found using a polynomial method and that Berge's Perfect Graph Conjecture holds true for the class of graphs that are free of claws.

In algebraic graph theory we enjoy fantastic applications of abstract algebra in graph theory. In this context, numerous graph structures to algebraic structures, such as groups, semigroups, rings, etc., were defined by various academics. This type of research aids in the characterization of the resultant graphs, the exploration of the algebraic structure from the isomorphic graph, and the recognition of a relationship between the related graphs and algebraic structures. Graph defined on groups has lot of collections such as the generating graph, enhanced power graph, commuting graph, soluble graph, and so on. From ancient to the latest collection, interested readers may follow the articles \cite{acns, Nath, Dey,Burness, pjc, Lucchini}. The power graph is one of them. The concept of power graph was initially presented by Kelarev and Quinn in 2002 (\cite{Kelarev}) as a directed graph. 
\begin{definition}
	The directed power graph of a group $G$, denoted by $\overrightarrow{P}(G)$, is a graph whose vertices are the elements of the group $G$, and there is an arc $u\rightarrow v$ ($u\neq v$) if and only if $v=u^{i}$, for some positive integer $i$.
\end{definition}
Later in 2009, Chakrabarty et al. defined the undirected power graph \cite{Chakrabarty}.
\begin{definition}
	The undirected power graph (or simply power graph) of a group $G$, denoted by $P(G)$, is a graph whose vertices are the elements of the group $G$, in which two vertices $u$ and $v$ are connected by an edge  if and only if either $u=v^i$ or $v=u^j$ for some positive integers $i$, $j$. 
\end{definition}
The reduced power graph, denoted by $P^*(G),$ is the graph $P(G)\setminus \{e\}$, where $e$ is the identity of the group. In a variety of literature sources, power graphs have  investigated in several ways. We refer to two survey publications \cite{Abawajy, Kumar} for intriguing results regarding power graphs.

In terms of forbidden induced subgraphs, several important classes of graphs can be specified. Among other graph types, these classes include triangle-free graphs, claw-free graphs, chordal graphs, cograph, split, and threshold graphs. This topic was introduced for the different forbidden induced subgraphs of power graphs in an earlier study \cite{Doostabadi, cmm}. In \cite{Mehatari}, the authors answered the question: \lq\lq Which power graphs are cograph?\rq\rq. Bera discussed the line graph characterization of the power graph in \cite{Bera}. 

In the power graph, the identity element is adjacent to the other vertices of the graph. Therefore the condition \lq$P(G)$ is claw-free\rq \ can occur only if $G$ is either a cyclic group of prime power order or a cyclic group of the form $C_{p^{m}q}$, where $p, q$ are distinct primes and $m\geq 1$. However, the problem gets more intriguing if we remove the identity from the power graph. 

This paper addresses the problem: \lq \lq Identify the groups whose reduced power graph is claw-free\rq\rq. 

In Section \ref{prelim} we introduce some preliminary considerations which allow, in particular, to obtain some information on finite nilpotent groups whose reduced power graph is claw-free. We prove that {{\sl{if $G$ is a $p$-group
with $p$ odd and $P^*(G)$ is claw-free, then either $G$ is cyclic or $\exp(G)=p$}}. The situation is more complicate when $p=2:$ {\sl{if $G$ is a non-cyclic 2-group and $P^*(G)$ is claw-free, then either $G$ is a dihedral group or $\exp(G)\leq 4$}}. Moreover {\sl{a nilpotent group $G$ whose reduce power-graph is claw-free is either a $p$-group or a cyclic group whose order is divisible by at most 2 different primes}}.
Finally we prove that if $G$ is not nilpotent and $P^*(G)$ is claw-free, then either $G$ is indecomposable or there exist two distinct primes $p$ and $q$ such that
$G=H\times K$ where $H\cong C_p$ and $K$ is the semidirect product of a cyclic $q$-group $Q$  and
a cyclic $p$-subgroup $P$ of $\aut(Q).$

Our main result in the solvable case, proved in Section \ref{solv}, is the following: 
\begin{theorem}\label{fourprime}
	Let $G$ be a finite solvable group. If $P^*(G)$ is claw-free, then $\pi(G)\leq 4,$ denoting
 by $\pi(G)$ the number of distinct prime divisors of the order of $G.$
\end{theorem}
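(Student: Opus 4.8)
The plan is to show that a solvable group $G$ with $P^*(G)$ claw-free cannot have five or more prime divisors by producing an induced claw whenever $\pi(G)\geq 5$. The natural source of claws in a power graph is a single vertex together with three mutually non-adjacent vertices, all adjacent to it. In the reduced power graph, an edge between $g_1$ and $g_2$ means one cyclic subgroup contains the other, so a useful claw is obtained from an element $x$ of order divisible by several primes, together with three elements lying below $\langle x\rangle$ (i.e. powers of $x$) that are pairwise incomparable under the subgroup-containment relation. Concretely, if $x$ has order divisible by three distinct primes $p,q,r$, then the three subgroups of orders $p$, $q$, $r$ inside $\langle x\rangle$ give generators $a,b,c$ with $\langle a\rangle,\langle b\rangle,\langle c\rangle$ pairwise incomparable, each comparable to $\langle x\rangle$; this already yields a claw centered at $x$. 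Hence the first reduction is that \emph{no element of $G$ can have order divisible by three or more distinct primes}; in other words, every cyclic subgroup (equivalently, every element order) involves at most two primes. This is the key structural constraint I would extract first.

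Granting that every element order is divisible by at most two primes, I would next exploit the solvability of $G$ to locate elements forcing a large prime spectrum. The strategy is to use a Hall subgroup or a Sylow tower to find, among any five primes dividing $|G|$, a subgroup in which one can manufacture an element of order divisible by three primes, contradicting the previous step; or failing that, to directly build a claw from three Sylow subgroups whose generators are pairwise incomparable and share a common neighbor. A clean way to organize this is to consider a minimal counterexample and pass to quotients or to the Fitting subgroup. Because $G$ is solvable, $F(G)$ is nontrivial and self-centralizing in $G/\Phi(G)$-type arguments are available; I would use the action of Hall subgroups on Sylow subgroups to force, when $\pi(G)$ is large, either a cyclic subgroup meeting three primes or a configuration of three commuting elements of distinct prime orders, the latter generating an element of order $pqr$ in the abelian case. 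The earlier nilpotent results from Section \ref{prelim} (that a nilpotent claw-free group is a $p$-group or a cyclic group with at most two primes) would be invoked to handle nilpotent sections directly.

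The heart of the argument, and where most of the work lies, is ruling out the case where $G$ has five prime divisors but \emph{no} single element of order divisible by three primes — so the obstruction has to come from three elements with a common neighbor rather than from one element sitting above three incomparable powers. Here I would look for an element $y$ (for instance a generator of a cyclic Hall $\{p,q\}$-subgroup, or a central element) that is adjacent in $P^*(G)$ to generators of three Sylow subgroups for three further primes; since distinct Sylow subgroups for distinct primes give elements of coprime order, their cyclic subgroups are automatically pairwise incomparable, so the only thing to verify is the existence of the common neighbor $y$ above all three. Producing such a $y$ is exactly what solvability (via Hall subgroups, the existence of complements, and normal Sylow towers) should deliver when there are enough primes.

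The main obstacle I anticipate is precisely this common-neighbor construction: incomparability of the three ``leaves'' is cheap (coprime orders), but guaranteeing a vertex adjacent to all three in the \emph{reduced} power graph requires an element whose cyclic subgroup contains, or is contained in, each of the three leaf subgroups simultaneously — and containment of three pairwise-coprime-order subgroups in one cyclic group reintroduces an element of order divisible by three primes, which we have already forbidden. Reconciling these two constraints is the crux: it should force $\pi(G)\leq 4$, because with five primes one cannot simultaneously avoid a three-prime element and avoid a claw. I would therefore expect the final count to come from a careful case analysis on how the at-most-two-prime element orders can be distributed across five Sylow subgroups of a solvable group, using Hall $\pi$-subgroups to show that five primes inevitably create either a three-prime cyclic subgroup or an independent triple with a shared dominating vertex.
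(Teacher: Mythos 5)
Your opening reduction is correct and coincides with the paper's (Remark \ref{manyprimes}(1)): if $|g|$ is divisible by three distinct primes $p_1,p_2,p_3$, then $(g\to g^{p_1},g^{p_2},g^{p_3})$ is a claw, so every element order involves at most two primes. Beyond that, however, there is a genuine gap, and it sits exactly where you locate the ``crux''. The configuration you propose to hunt for --- three leaves of pairwise coprime prime(-power) orders plus a common neighbour $y$ --- is provably impossible once three-prime elements are excluded: $\langle y\rangle$ cannot be contained in subgroups of coprime orders, containing all three leaves would make $|y|$ divisible by three primes, and any mixed case makes two leaves adjacent. You notice this yourself, but then defer the resolution to an unspecified ``careful case analysis''; that analysis \emph{is} the theorem, and nothing in the proposal carries it out. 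The claw that actually does the work has the opposite orientation (the paper's ``second type''): the \emph{centre} is an element $x$ of prime order $p$, and the leaves are elements $a_1x,a_2x,a_3x$ of two-prime orders $pp_1,pp_2,pp_3$, pairwise incomparable, each having $x$ as a power (Remark \ref{centr}). So the real task is to show that in a solvable group with five prime divisors some element of prime order centralizes elements of three \emph{other} prime orders.

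The paper's proof supplies precisely the machinery your plan lacks. In a minimal counterexample, a maximal-normal-subgroup/chief-factor argument yields a Sylow tower $G=(((P_5\rtimes P_4)\rtimes P_3)\rtimes P_2)\rtimes P_1$ with every Sylow subgroup $P_i$ elementary abelian. Then there is a dichotomy on the action at the bottom of the tower. Either some $1\neq x\in P_5$ satisfies $C_G(x)>P_5$: then Lemma \ref{pabq} forces $P_5=\langle x\rangle$, Remark \ref{centr} forces the relevant centralizers to have a unique subgroup of each prime order, and one is driven to an abelian group $P_3P_2P_1$ with three prime divisors, contradicting Proposition \ref{th_claw_nil}. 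Or $H=P_4P_3P_2P_1$ acts fixed-point-freely on $P_5$: then $H$ is a Frobenius complement, and Lemma \ref{frocomp} (Morgan--Parker: in odd order any two elements of prime order commute; in even order there is a unique involution commuting with all prime-order elements) produces cyclic subgroups of orders $p_4p_3$, $p_4p_2$, $p_4p_1$ sharing one subgroup of order $p_4$ --- exactly the claw of Remark \ref{centr}. Your appeal to Hall subgroups and Sylow towers gestures in this direction, but without the elementary-abelian Sylow-tower reduction, the fixed-point-free dichotomy, and the Frobenius-complement input (or some substitute for them), the contradiction from five primes is never actually derived.
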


The previous result is best possible. For example let $C=\langle x_1, x_2\rangle,$ with $|x_1|=31,$ and $|x_2|=61;$ then $\langle x_1\rangle$ has an automorphism 
$\alpha_1$ of order 15 and $\langle x_2\rangle$ has an automorphism 
$\alpha_2$ of order 15. Let $\langle y\rangle \cong {C_{15}}.$ Then we may define a semidirect product $G=C\rtimes \langle y\rangle\cong C_{31\cdot 61}\rtimes C_{15},$ in which $(x_1x_2)^y=x_1^{\alpha_1}x_2^{\alpha_2}.$ It follows from Proposition \ref{squarefree} that  $P^*(G)$ is claw-free.

\

In Section \ref{unso} we investigate the structure of finite groups $G$ with claw-free reduced power graph, when the solvability assumption is removed. Our main result is:

\begin{theorem}\label{unsolva}
	Let $G$ be a finite group. If $G$ is not solvable and $P^*(G)$ is claw-free, then $G$ is an almost simple group.
\end{theorem}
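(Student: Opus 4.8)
The plan is to show that a non-solvable group $G$ with claw-free $P^*(G)$ must be almost simple, meaning $S \leq G \leq \aut(S)$ for some non-abelian simple group $S$, equivalently that $G$ has a unique minimal normal subgroup which is simple and is its own centralizer. I would argue by analyzing the minimal normal subgroups and the centralizer of the generalized Fitting subgroup, exploiting the claw-free condition to rule out the configurations that an almost simple group forbids.

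\medskip

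First I would establish that $G$ cannot contain too many commuting elements of coprime order, since the power graph detects exactly the cyclic-subgroup containment relation. The key claw-avoidance principle to extract is this: if $x$ is an element of order $n$ and there are three elements $y_1, y_2, y_3$, pairwise non-adjacent in $P^*(G)$, each adjacent to $x$, then $\{x; y_1, y_2, y_3\}$ is an induced claw. In a group, adjacency of $x$ to $y_i$ forces $\langle y_i\rangle \leq \langle x\rangle$ or $\langle x\rangle \leq \langle y_i\rangle$; this is a strong constraint. In particular I would look at an element $x$ of prime order $p$ lying in the center of a Sylow $p$-subgroup and count how many pairwise-incomparable neighbours it can have. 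The upshot should be a bound saying the structure around any central prime-order element is tightly controlled, which clashes with the rich subgroup lattice of a direct product.

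\medskip

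Next I would rule out a product structure. Suppose $G$ has two distinct minimal normal subgroups, or a minimal normal subgroup $N = S_1 \times \cdots \times S_k$ with $k \geq 2$ nonabelian simple factors, or a minimal normal subgroup $N$ that is abelian with $C_G(N) \neq N$. In each case I would exhibit elements of coprime order that commute and whose cyclic subgroups are pairwise incomparable, producing an induced $K_{1,3}$: a direct factor gives commuting elements $a \in S_1$, $b \in S_2$ with $\langle a\rangle, \langle b\rangle, \langle ab\rangle$ pairwise incomparable and all joined to a common neighbour of suitable order. Since $G$ is non-solvable, some composition factor is a nonabelian simple group $S$; I would use the earlier nilpotent and solvable structural results (Theorem~\ref{fourprime}) together with the claw-free constraint to force the generalized Fitting subgroup $\F^*(G)$ to be a single nonabelian simple group equal to its own centralizer, which is precisely the almost simple condition.

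\medskip

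The hard part will be showing that $\F^*(G)$ is simple rather than a direct product of several isomorphic simple groups, and simultaneously that the solvable normal part is trivial. Direct products of simple groups carry many pairwise-incomparable cyclic subgroups of elements that commute, so the claw-free hypothesis ought to forbid them, but turning ``ought to'' into a clean contradiction requires producing, for an arbitrary nonabelian simple $S$, three specific commuting elements inside $S \times S$ whose cyclic subgroups are mutually incomparable and which share a common power-graph neighbour. Nonabelian simple groups always contain commuting involutions or elements of a common prime order, so I expect to take involutions $t_1 \in S_1$, $t_2 \in S_2$ and form the claw on $\{t_1 t_2; t_1, t_2, t_1 t_2 \cdot z\}$ or similar; verifying incomparability and the absence of unwanted edges is where the care lies. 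Once product structures are eliminated, the remaining possibility is the almost simple one, completing the proof.
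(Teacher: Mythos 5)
Your proposal has a genuine structural gap: the case analysis you set up does not exhaust the possibilities. You rule out (i) two distinct minimal normal subgroups, (ii) a minimal normal subgroup that is a product of $k\geq 2$ simple factors, and (iii) an abelian minimal normal subgroup $N$ with $C_G(N)\neq N$; but a group can have a \emph{unique} minimal normal subgroup $N$ that is elementary abelian and self-centralizing ($C_G(N)=N$) with $G/N$ non-solvable --- affine-type groups such as ${\rm{AGL}}(3,2)=C_2^3\rtimes {\rm{SL}}(3,2)$ are exactly of this shape, are not almost simple, and are untouched by your three cases. This is precisely the hard core of the paper's proof: after a minimal-counterexample reduction (Frattini argument on the solvable radical, then Proposition \ref{th_claw_nil}) the authors are left with $F(G)$ a $p$-group and $G/F(G)$ simple, and they must then show $p=2$ (via the Frobenius complement Lemma \ref{frocomp}), that $F(G)$ is elementary abelian, and then invoke the structure of claw-free $2$-groups (Propositions \ref{2groups} and \ref{quatfree}, resting on the classification of quaternion-free $2$-groups) to force $F(G)\leq C_2\times C_2\times C_2$ and $G/F(G)\cong {\rm{SL}}(3,2)$; even then the two surviving candidates of order $1344$ are eliminated by an explicit (computer-assisted) check. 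Your phrase ``use the earlier structural results to force $F^*(G)$ to be a single nonabelian simple group'' is exactly the step that cannot be waved through: no soft centralizer argument disposes of the affine case, because elements of odd prime order in such a group need not centralize enough of $N$ to produce a claw cheaply.

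There is also a concrete error in your proposed claw. With involutions $t_1\in S_1$, $t_2\in S_2$, the configuration $\{t_1t_2;\ t_1,\ t_2,\ t_1t_2z\}$ is not a claw: $t_1$ and $t_1t_2$ are distinct elements of order $2$, and two distinct involutions are never adjacent in a power graph, since neither cyclic subgroup can contain the other. The working construction (used throughout the paper, e.g.\ Remark \ref{centr} and Proposition \ref{centralizer}) takes a central vertex $x$ of prime order $p$ and pendants $a_1x, a_2x, a_3x$ where the $a_i$ generate pairwise distinct subgroups of prime orders different from $p$ and commute with $x$: then $x\in\langle a_ix\rangle$ gives the three edges, while incomparability of the $\langle a_ix\rangle$ kills the unwanted ones. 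With that fix, your elimination of genuine direct products $S_1\times S_2$ does go through (a non-abelian simple $S_2$ supplies three such subgroups of prime order), and indeed matches the paper's Theorem \ref{inde}/Proposition \ref{centralizer}; but the affine case above still remains, so the proposal as it stands does not prove the theorem.
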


Section \ref{simgr} contains a complete classification of finite simple groups whose reduced power graph is claw-free. 
It turn out that if $G$ is a finite non-abelian simple group and $P^*(G)$ is claw free, then $G\cong \psl(2,q)$. More precisely:

\begin{theorem}\label{simplenoclaw}
	Let $G$ be a finite non-abelian simple groups. Then
	$P^*(G)$ is claw-free if and only if
	$G\cong \psl(2,q)$ and each of the two numbers $\frac{q\pm 1}{(2,q-1)}$ is either a prime-power or the product of a prime and a prime-power.
\end{theorem}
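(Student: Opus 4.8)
The plan is to recast claw-freeness of $P^*(G)$ as a purely poset-theoretic condition on the family $\mathcal{C}(G)$ of nontrivial cyclic subgroups of $G$, ordered by inclusion, and then to read off both directions from the very rigid subgroup structure of $\psl(2,q)$. Recall that two nonidentity vertices are adjacent precisely when the cyclic subgroups they generate are comparable. Given a claw with centre $x$ and leaves $a,b,c$, each leaf is comparable to $x$; but if one leaf generated a subgroup below $\langle x\rangle$ and another a subgroup above $\langle x\rangle$, those two leaves would themselves be comparable, hence adjacent, contradicting that they are leaves. Thus all three leaves lie on the same side of $x$, and since leaves generating equal cyclic subgroups are adjacent, a claw is exactly a $3$-element antichain of $\mathcal{C}(G)$ lying entirely inside the principal ideal below some nontrivial $\langle x\rangle$ or entirely inside the principal filter above it. Consequently, $P^*(G)$ is claw-free if and only if, for every nontrivial cyclic subgroup $C$, both the set of cyclic subgroups contained in $C$ and the set of cyclic subgroups containing $C$ have width at most $2$. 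In particular, applied to a cyclic group, this recovers the elementary fact (via the antichain analysis of the divisor grid of $|C|$, presumably already recorded in Section~\ref{prelim}) that $P^*(C_n)$ is claw-free if and only if $n$ is a prime power or the product of a prime and a prime power.

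I would then dispatch the $\psl(2,q)$ case using the classical \emph{partition} of $G=\psl(2,q)$: every nonidentity element lies in a unique maximal cyclic subgroup, namely a conjugate of a split or nonsplit maximal torus (cyclic of order $\tfrac{q-1}{(2,q-1)}$ or $\tfrac{q+1}{(2,q-1)}$), or an order-$p$ subgroup inside an elementary abelian Sylow $p$-subgroup. Therefore each nontrivial cyclic $C$ is contained in a unique maximal cyclic subgroup $M$, and both the ideal below $C$ and the filter above $C$ are forced to live inside the divisor lattice of $|M|$. Hence $P^*(\psl(2,q))$ is claw-free if and only if every maximal cyclic subgroup has claw-free reduced power graph, i.e.\ if and only if each of $p$, $\tfrac{q-1}{(2,q-1)}$, $\tfrac{q+1}{(2,q-1)}$ is a prime power or a product of a prime and a prime power. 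As $p$ is automatically prime, this is exactly the stated condition on the two numbers $\tfrac{q\pm1}{(2,q-1)}$, and it simultaneously yields sufficiency and the precise necessary condition within the family.

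It remains to prove that no other non-abelian simple group qualifies. The ideal condition immediately forces that every element order of $G$ is a prime power or a product of a prime and a prime power; in particular every element order is divisible by at most two primes, and no element order is divisible by $r^2 s^2$ for distinct primes $r,s$. Invoking CFSG and the known spectra of finite simple groups, I would use this to cut the candidates down to the family $\psl(2,q)$ (note $A_5\cong\psl(2,4)\cong\psl(2,5)$ and $A_6\cong\psl(2,9)$ already lie in it) together with a finite list of small exceptions: the remaining alternating groups, the higher-rank and twisted Lie-type groups, and the sporadic groups. The generic members of these families are eliminated directly from the spectrum, since a torus with several cyclic factors, or a cycle type on disjoint primes, produces an element whose order violates the element-order restriction.

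The main obstacle is precisely the borderline cases that \emph{satisfy} the element-order restriction yet are not of the form $\psl(2,q)$ --- for instance $A_7$, $A_8$, $M_{11}$, and several small groups of Lie type. These cannot be ruled out by the ideal (spectrum) condition alone and must be killed by the filter condition: one exhibits a single element, typically an involution with a sufficiently large centralizer, that lies in three pairwise incomparable cyclic subgroups, producing a claw with that element as centre. Organizing the CFSG case division so that only $\psl(2,q)$ survives, and in particular controlling, family by family, the number of pairwise incomparable maximal cyclic overgroups of a well-chosen element of small prime order, is the technical heart of the argument; I would treat the generic Lie-type cases by forcing three incomparable cyclic overgroups from commuting semisimple and unipotent configurations, and finish the finitely many small exceptions by direct inspection of their subgroup lattices.
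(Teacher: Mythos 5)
Your reformulation of claw-freeness as a width condition on the ideal and filter of each cyclic subgroup is correct, and your treatment of $\psl(2,q)$ via the classical partition into conjugates of the elementary abelian Sylow $p$-subgroup and the two cyclic tori of orders $(q\pm1)/(2,q-1)$ is essentially identical to the paper's own argument (which cites Huppert, Satz II.8.5, for the partition): a claw must lie inside a single partition component, so claw-freeness of $P^*(G)$ reduces to claw-freeness of the components, which for a cyclic group means its order is a prime power or a prime times a prime power. That direction of your proposal is sound.

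The genuine gap is the converse: showing that no non-abelian simple group other than $\psl(2,q)$ qualifies. What you offer here is a plan, not a proof. You propose to restrict the spectrum (every element order a prime power or a prime times a prime power), invoke CFSG to reduce to $\psl(2,q)$ plus ``a finite list of small exceptions,'' and then kill the survivors by exhibiting, family by family, an element with three pairwise incomparable cyclic overgroups. None of these steps is carried out: the claim that the spectrum condition leaves only $\psl(2,q)$ and finitely many exceptions is unsubstantiated (it requires working through the spectra of all alternating groups, all families of Lie type in all ranks, and the sporadics), and the ``filter condition'' eliminations --- which you yourself identify as the technical heart --- are exactly the part that needs the detailed, group-by-group knowledge of how cyclic subgroups intersect. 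The paper sidesteps this enormous case analysis with one structural observation you are missing: since $C_2\times C_2\times C_p$ ($p$ odd) contains a claw of the second type, claw-freeness forces the centralizer of every four-subgroup of $G$ to be a $2$-group, and Syskin's theorem classifies the finite simple groups with this property into a short explicit list ($\psl(2,q)$, ${\rm Sz}(q)$, ${\rm J}_1$, ${\rm M}_{11}$, $\psl(3,4)$, $^2{\rm F}_4(2)'$, $\psl(3,q)$, $\psu(3,q)$, $\g_2(q)$ for odd $q$). The remaining cases are then dispatched by short ad hoc claw constructions (e.g.\ an involution of ${\rm M}_{11}$ lies in three cyclic subgroups of order $4$; $\psl(3,q)$ and $\psu(3,q)$ contain ${\rm SL}(2,q)$ with nontrivial center, contradicting the triviality of centers forced by claw-freeness). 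Without Syskin's theorem or an equivalent reduction, your CFSG sweep remains an open task of substantial size, so the proposal as written does not establish the theorem.
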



Combining Theorems \ref{unsolva} and \ref{simplenoclaw} we finally  prove the following:

\begin{theorem}\label{fiveprime}
	Let $G$ be a finite group. If $P^*(G)$ is claw-free, then $\pi(G)\leq 5.$ 
\end{theorem}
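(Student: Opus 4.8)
The plan is to combine the structural dichotomy from Theorem \ref{unsolva} with the solvable bound from Theorem \ref{fourprime} and the explicit classification from Theorem \ref{simplenoclaw}. Since these three results together cover every finite group $G$ with $P^*(G)$ claw-free, I would split the argument into exactly two cases according to whether $G$ is solvable.

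\emph{The solvable case.} If $G$ is solvable, Theorem \ref{fourprime} gives immediately $\pi(G)\leq 4\leq 5$, so nothing further is needed here. This case is entirely routine.

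\emph{The non-solvable case.} Suppose $G$ is not solvable. By Theorem \ref{unsolva}, $G$ is almost simple, so there is a non-abelian simple group $S=\soc(G)$ with $S\leq G\leq \aut(S)$. Since $P^*(G)$ is claw-free and $S$ is a subgroup of $G$, the induced subgraph $P^*(S)$ is also claw-free (being claw-free is inherited by induced subgraphs, and $P^*(S)$ is the subgraph of $P^*(G)$ induced by the non-identity elements of $S$). Hence Theorem \ref{simplenoclaw} applies to $S$, forcing $S\cong \psl(2,q)$ with each of $\frac{q\pm1}{(2,q-1)}$ either a prime-power or a product of a prime and a prime-power. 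The core of the argument is then to bound $\pi(G)$. First I would bound $\pi(S)$: writing $q=r^f$ for a prime $r$, the order of $\psl(2,q)$ is $\frac{q(q-1)(q+1)}{(2,q-1)}$, so its prime divisors are $r$ together with the primes dividing $\frac{q-1}{(2,q-1)}$ and $\frac{q+1}{(2,q-1)}$. The classification condition restricts each of these two numbers to have at most two distinct prime divisors, so each contributes at most two primes, giving $\pi(S)\leq 1+2+2=5$. The remaining point is to check that passing from $S$ to $G$ introduces no new prime: since $G/S$ embeds in $\mathrm{Out}(\psl(2,r^f))$, whose order is $2f\cdot(\text{a divisor of }\gcd(2,q-1))$, every prime dividing $|G|$ already divides $|S|$ provided the primes dividing $f$ (and the factor $2$) already appear among the divisors of $|S|$. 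Both $2$ and $r$ divide $|S|$, and any prime dividing $f$ divides $r^f-1=q-1$ hence already lies in $\pi(S)$; therefore $\pi(G)=\pi(S)\leq 5$.

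The main obstacle I anticipate is the sharp accounting in the bound $\pi(S)\leq 5$: one must ensure that the decomposition into $r$, the primes of $\frac{q-1}{(2,q-1)}$, and the primes of $\frac{q+1}{(2,q-1)}$ does not double-count or under-count, and in particular that $r$ genuinely contributes a prime disjoint from the other two factors (which follows since $r$ divides neither $q-1$ nor $q+1$). One should also verify that the outer-automorphism primes are subsumed, which is the delicate structural check; but since $\mathrm{Out}(\psl(2,q))$ for $q=r^f$ is a $\{2\}\cup\pi(f)$-group and $\pi(f)\subseteq\pi(q-1)\subseteq\pi(S)$, no new prime escapes. With both cases settled, $\pi(G)\leq 5$ in all cases, completing the proof.
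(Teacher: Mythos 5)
Your solvable case and your count $\pi(S)\leq 5$ (the prime $r$ plus at most two primes from each of $(q\pm1)/d$) are fine, but the last step of the non-solvable case contains a genuine gap. The arithmetic claim you rely on --- that any prime dividing $f$ divides $r^f-1=q-1$ --- is false. Take $r=2$, $f=5$, so $q=32$: then $q-1=31$ and $q+1=33=3\cdot 11$, and $5$ divides neither, so $5\nmid|\psl(2,32)|=2^5\cdot3\cdot11\cdot31$, yet $5$ divides $|\mathrm{P\Gamma L}(2,32)|$. In general the relevant condition is that the multiplicative order of $r$ modulo a prime $s\mid f$ divides $2f$, and there is no reason for that to hold; consequently your asserted equality $\pi(G)=\pi(S)$ is simply not true for arbitrary almost simple groups with socle $\psl(2,q)$, and no purely arithmetic argument can close this case.

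What is missing is the one place in this theorem where the claw-free hypothesis on $G$ itself (not merely on $S$) must be invoked, and this is exactly how the paper proceeds. If a prime $r$ divides $|G|$ but not $|S|$, then $r\mid f$, and a field automorphism $g\in G$ of order $r$ has centralizer $C_S(g)\cong\psl(2,p^{u})$ with $u=f/r$, so $G$ contains a subgroup $H\cong\psl(2,p^{u})\times C_r$. Since $|\psl(2,p^{u})|$ is divisible by at least two distinct primes and is coprime to $r$, the group $H$ is non-nilpotent and decomposable but not of the restricted shape $C_p\times(Q\rtimes P)$ permitted by Theorem \ref{inde}; hence $P^*(H)$, and therefore $P^*(G)$, contains a claw, a contradiction. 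This group-theoretic argument is what rules out extra primes coming from field automorphisms (for instance, it is what shows $P^*(\mathrm{P\Gamma L}(2,32))$ is not claw-free); your proposal, by treating this step as an unconditional arithmetic identity, lets such groups slip through unexamined.
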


The bound given in the previous statement is best possible.
For example
the order of $\psl(2,64)$ is divisible by the five primes
2, 3, 5, 7, 13, and $P^*(\psl(2,64))$ is claw-free (see Theorem \ref{psl2}).

\

This paper uses standard notations. To learn the fundamental concepts and symbols of group theory that are not covered here, the reader can consult any of \cite{isa, Robinson}.


\section{Some preliminary results}\label{prelim}
Assume that $P^*(G)$ contains a claw made by $3$ pendent vertices $a_1, a_2, a_3$ and a central vertex $b$.
For $1\leq i\leq 3,$ the directed graph $\overrightarrow{P}(G)$ contains at least one of the two
edges $a_i\to b$ and $b\to a_i.$ Moreover it cannot be
that $\overrightarrow{P}(G)$ contains $a_{i_1}\to b$ and
$b\to a_{i_2},$ for $a_{i_1}\neq a_{i_2}$, otherwise it would also contain the edge $a_{i_1}\to a_{i_2}.$ It follows that there are two possible types of claws in $P^*(G):$
\begin{enumerate}
	\item the subgraph of $\overrightarrow{P}(G)$ induced by the four vertices $a_1,a_2,a_3$ and $b$ consists of the three edges: $b\to a_1, b\to a_2, b\to a_3.$ In this case we will say that $a_1,a_2,a_3,b$ induces a claw of first type and we will use the symbol $(b \to a_1,a_2,a_3).$
	\item  the subgraph of $\overrightarrow{P}(G)$ induced by the four vertices $a_1,a_2,a_3$ and $b$ consists of the three edges: $a_1\to b, a_2\to b, a_3\to b.$ In this case we will say that $a_1,a_2,a_3,b$ induces  a claw of second type and we will use the symbol $(a_1,a_2,a_3 \to b).$
\end{enumerate}

Notice that if $(b\to a_1,a_2,a_3)$ is a claw of first type, then $\langle a_1\rangle, \langle a_2\rangle, \langle a_3\rangle$ are incomparable subgroups of the cyclic group $\langle b\rangle$: this implies that $|b|$ cannot be a prime power, since the subgroup lattice of a cyclic $p$-group is a chain. In particular if every non-trivial element of $G$ has prime-power order, then $P^*(G)$ does not contain claws of first type. Recall that a finite group is called an EPPO group if every non-identity element is of prime-power order.  EPPO groups were first introduced in 1950 by Higman \cite{Higman}; in 1960 Suzuki classified the simple EPPO groups \cite{suzuki2} and a complete classification can be found in \cite{hei}. 
Assume that $G$ is an EPPO group. Since $P^*(G)$ cannot contain claws of the first type, we deduce that $P^*(G)$ is claw-free if and only if $G$ does not contain three distinct cyclic subgroups with non-trivial intersection. In particular $P^*(G)$ is claw-free if all the non-trivial elements of $G$ have prime order (this occurs if and only if $G$ has exponent $p$, or $G$ is a Frobenius group of order $p^aq$ and the Sylow $p$-subgroup of $G$ has exponent $p$, or $G=A_5$ \cite{allp}).  

\begin{remark}\label{manyprimes}Let $g$ be a non-trivial element of a finite group $G.$
	\begin{enumerate}
		\item If $g$ has ordine $p_1p_2p_3$ where $p_1, p_2, p_3$ are distinct primes, then $(g\to g^{p_1},g^{p_2},g^{p_3})$ is a claw of first type in $P^*(G).$
		\item If $g$ has ordine $p_1^2p_2^2$ where $p_1, p_2$ are distinct primes, then $(g\to g^{p_1^2},g^{p_2^2},g^{p_1p_2})$ is a claw of first type in $P^*(G).$
	\end{enumerate}
\end{remark}

\begin{remark}
	If $G=\langle g \rangle$ is a cyclic group of order $p^aq,$ where $p$ and $q$ are distinct primes, then $P^*(G)$ is claw-free.
\end{remark}
\begin{proof}
Assume that there exists a claw in $P^*(G)$ with pendants $a_1, a_2, a_3.$ Since the subgroup lattice of $\langle g^q\rangle$ is a chain, at most one of the three elements $a_1,a_2,a_3$ can belong to $\langle g^q\rangle.$ So 
	we may assume $a_1=g^{p^{n_1}}$ and $a_2=g^{p^{n_2}}$ with $n_1\leq n_2$. But in this case $P^*(G)$ contains the edge $a_1\to a_2.$
\end{proof}

\begin{remark}\label{p2p}
	Let $p$ be an odd prime and $G=C_{p^2}\times C_p$ or let $p=2$ and $G=C_4\times C_2\times C_2.$ Then
	$P^*(G)$ contains a claw of the second type.
\end{remark}
\begin{proof}
	The group $G$ contains an element $g$ of order $p^2$ and three pairwise distinct subgroups $\langle a_1\rangle, \langle a_2\rangle, \langle a_3\rangle$ of order $p$, not contained in $\langle g\rangle$. It can be easily seen that
	$(a_1g,a_2g,a_3g\to g^p)$ is a claw in $P^*(G).$
\end{proof}
Notice that the first part of the previous statement is not true when $p=2.$ Indeed if $G=C_4\times C_4$, then every element of order 2 in $G$ is contained in exactly two cyclic subgroups of order 4 and therefore $P^*(G)$  is a claw-free graph. As a consequence,
$P^*(C_4\times C_2)$ is also claw-free. However, when $p=2,$ the following  weaker version of Remark \ref{p2p} holds.
\begin{remark}
	Let $G=C_{2}\times C_8.$ Then
	$P^*(G)$ contains  a claw of the second type.
\end{remark}
\begin{proof}If $a$ in an element of $G$ of order 8 and $b$ is an element of order 2 with $b\neq a^4,$ then $(a,ab,a^2b\to a^4)$ is a claw in $P^*(G).$
\end{proof}

\begin{corollary}
	\label{th_claw_abelian}
	Let $G$ be an abelian $p$-group. If $G$ is cyclic or has exponent $p,$ then $P^*(G)$ is claw-free. Moreover, if $G$   is neither  cyclic nor elementary abelian, then $P^*(G)$ is claw-free if and only if $p=2$ and either $G\cong C_{4}\times C_{2}$ or $G\cong C_{4}\times C_{4}$.
\end{corollary}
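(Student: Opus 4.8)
The plan is to exploit two structural facts. First, the discussion opening Section~\ref{prelim} shows that the central vertex of a first-type claw must have order divisible by two distinct primes; this is impossible in a $p$-group, so for a $p$-group \emph{every} claw in $P^*(G)$ is of the second type, i.e.\ of the form $(a_1,a_2,a_3\to b)$ with $\langle b\rangle$ properly contained in three pairwise incomparable cyclic subgroups $\langle a_1\rangle,\langle a_2\rangle,\langle a_3\rangle$. Second, I would record the elementary but essential observation that for any subgroup $H\leq G$ the graph $P^*(H)$ is precisely the subgraph of $P^*(G)$ induced on $H\setminus\{e\}$, since adjacency of two elements depends only on the containment relation between the cyclic subgroups they generate, and this is the same whether computed in $H$ or in $G$. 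Consequently, if $P^*(H)$ contains a claw, then so does $P^*(G)$.

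For the first assertion I would argue directly. If $G$ is a cyclic $p$-group, its subgroup lattice is a chain, so any two non-identity elements are comparable and $P^*(G)$ is complete, hence claw-free. If $\exp(G)=p$, then every non-identity element has order $p$, and two such elements are adjacent if and only if they generate the same subgroup; thus $P^*(G)$ is a disjoint union of cliques, which is claw-free (this also follows from the remark, recalled above, that $P^*(G)$ is claw-free whenever all non-trivial elements have prime order).

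For the \lq\lq Moreover\rq\rq\ part I would exhibit a forbidden subgroup whenever $G$ is not one of the two listed groups, and invoke the already-established claw-freeness of $C_4\times C_2$ and $C_4\times C_4$ for the converse. Write $G=C_{p^{a_1}}\times\cdots\times C_{p^{a_r}}$ with $a_1\geq\cdots\geq a_r$; being neither cyclic nor of exponent $p$ forces $r\geq 2$ and $a_1\geq 2$. If $p$ is odd, then $G$ contains $C_{p^2}\times C_p$, so Remark~\ref{p2p} produces a second-type claw and no such $G$ is claw-free. If $p=2$ and $\exp(G)\geq 8$, then $a_1\geq 3$ and $r\geq 2$, so $G$ contains $C_8\times C_2$, which carries a claw by the remark on $C_2\times C_8$.

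The delicate case, which I expect to be the main obstacle, is $p=2$ with $\exp(G)=4$, precisely because $C_4\times C_4$ is claw-free even though it is non-cyclic of exponent $4$, so one cannot simply appeal to Remark~\ref{p2p}. Here I would write $G\cong C_4^{s}\times C_2^{t}$ with $s\geq 1$ and $s+t\geq 2$, and show that unless $(s,t)\in\{(1,1),(2,0)\}$ the group $G$ contains a copy of $C_4\times C_2\times C_2$ (for instance, by replacing a surplus $C_4$ factor by its subgroup of order $2$, or by using two of the $C_2$ factors), whence $P^*(G)$ has a claw by Remark~\ref{p2p}. The two exceptional cases are exactly $G\cong C_4\times C_2$ and $G\cong C_4\times C_4$, both already known to be claw-free, which completes the classification.
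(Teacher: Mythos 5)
Your proposal is correct and follows essentially the same route the paper intends: the corollary is derived from the preceding remarks (Remark~\ref{p2p}, the claw in $C_2\times C_8$, and the observed claw-freeness of $C_4\times C_4$ and $C_4\times C_2$), combined with the structure theorem for finite abelian $p$-groups to locate a forbidden subgroup $C_{p^2}\times C_p$, $C_8\times C_2$, or $C_4\times C_2\times C_2$ in every non-exceptional case. Your write-up merely makes explicit the induced-subgraph observation and the enumeration of the exceptional parameters $(s,t)\in\{(1,1),(2,0)\}$, which the paper leaves implicit.
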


\begin{proposition}Let $p$ be an odd prime. If $G$ is a finite $p$-group, then $P^*(G)$ is claw-free if and only if either $G$ is cyclic or $G$ has exponent $p.$
\end{proposition}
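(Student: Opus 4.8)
The plan is to treat the two implications separately and, throughout, to use the observation already recorded before Remark \ref{manyprimes} that in a $p$-group no claw of the first type can occur: for a first-type claw $(b\to a_1,a_2,a_3)$ the order of $b$ would have to be divisible by at least two primes. Hence for a $p$-group the condition ``$P^*(G)$ is claw-free'' is equivalent to ``$P^*(G)$ contains no claw of the second type''. For the easy implication: if $G$ is cyclic of order $p^n$ its subgroup lattice is a chain, so any two non-identity elements generate comparable cyclic subgroups and $P^*(G)$ is complete, in particular claw-free; and if $\exp(G)=p$ then every non-identity element has prime order, so $P^*(G)$ is claw-free by the discussion preceding Remark \ref{manyprimes}. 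It therefore remains to prove that a non-cyclic $p$-group $G$ with $\exp(G)\ge p^2$ contains a claw.

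First I would fix an element $x\in G$ of order $p^2$ (which exists since $\exp(G)\ge p^2$) and choose a subgroup $H$ minimal among the subgroups of $G$ that contain $\langle x\rangle$ and are non-cyclic; such $H$ exists because $G$ itself qualifies. Since $H$ is non-cyclic, $\langle x\rangle\subsetneq H$, so $\langle x\rangle$ lies in some maximal subgroup $L$ of $H$; by minimality $L$ is cyclic, and $[H:L]=p$. Thus $H$ is a $p$-group possessing a cyclic subgroup of index $p$, and for $p$ odd the classification of such groups forces $H\cong C_{p^{k}}\times C_p$ or $H\cong M_{p^{k+1}}$ for some $k\ge 2$, where $M_{p^{k+1}}=\langle a,b\mid a^{p^k}=b^p=1,\ b^{-1}ab=a^{1+p^{k-1}}\rangle$ is the modular group (the cyclic option $C_{p^{k+1}}$ being excluded as $H$ is non-cyclic). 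In the first case $H$ already contains $C_{p^2}\times C_p$. In the second case, if $k=2$ then $H\cong M_{p^3}$, while if $k\ge 3$ a direct computation gives $b^{-1}(a^{p^{k-2}})b=a^{p^{k-2}}a^{p^{2k-3}}=a^{p^{k-2}}$ (the extra factor $a^{p^{2k-3}}$ is trivial because $2k-3\ge k$), so $\langle a^{p^{k-2}},b\rangle\cong C_{p^2}\times C_p$. Hence in every case $G$ contains a subgroup isomorphic to $C_{p^2}\times C_p$ or to $M_{p^3}$.

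Finally I would exhibit a claw in each of these two groups and transfer it to $G$: for any subgroup $K\le G$ the graph $P^*(K)$ is the subgraph of $P^*(G)$ induced on $K\setminus\{e\}$, so a claw in $P^*(K)$ is a claw in $P^*(G)$. A claw in $C_{p^2}\times C_p$ is provided by Remark \ref{p2p}. For $M=M_{p^3}=\langle a,b\mid a^{p^2}=b^p=1,\ b^{-1}ab=a^{1+p}\rangle$ one checks that its $p$ cyclic subgroups of order $p^2$ all contain the centre $Z=\langle a^p\rangle$ of order $p$ (indeed $g^p$ is a non-trivial element of the Frattini subgroup $\Phi(M)=Z$ whenever $|g|=p^2$); choosing generators $a_1,a_2,a_3$ of three of these subgroups (possible since $p\ge 3$) together with a generator $z$ of $Z$, the four elements $z,a_1,a_2,a_3$ induce $K_{1,3}$, that is, $(a_1,a_2,a_3\to z)$ is a claw of the second type. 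The main obstacle is the structural step of the second paragraph, namely producing, inside an arbitrary non-cyclic $p$-group of exponent at least $p^2$, the cyclic maximal subgroup that brings the classification of $p$-groups with a cyclic subgroup of index $p$ into play; once $H$ is pinned down to $C_{p^2}\times C_p$ or $M_{p^3}$ the rest is routine, the only genuinely special feature being the oddness of $p$, which is used both to discard claws of the first type and to guarantee that $M_{p^3}$ has $p\ge 3$ cyclic subgroups of order $p^2$.
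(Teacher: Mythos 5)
Your proof is correct, and it arrives at the same two critical subgroups as the paper --- $C_{p^2}\times C_p$ and the modular group $M_{p^3}=\langle a,b\mid a^{p^2}=b^p=1,\ b^{-1}ab=a^{1+p}\rangle$ --- but by a genuinely different route. The paper first reduces to non-abelian $G$ via Corollary~\ref{th_claw_abelian} and then splits according to whether $G$ has maximal class: if not, it quotes an exercise of Berkovich producing a non-cyclic abelian subgroup containing the chosen element of order $p^2$, so that Remark~\ref{p2p} applies; if so, it uses a normal subgroup $N\cong C_p\times C_p$ and analyzes $K=N\langle g\rangle$, which is either non-cyclic abelian of exponent $p^2$ or isomorphic to $M_{p^3}$, where the paper exhibits the claw $(a,ab,a^2b\to a^p)$ --- essentially the same claw you construct, since all $p$ cyclic subgroups of order $p^2$ in $M_{p^3}$ contain the centre. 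You instead take $H$ minimal among the non-cyclic subgroups containing $\langle x\rangle$, observe that minimality forces a cyclic maximal subgroup of $H$, and then invoke the classification of $p$-groups ($p$ odd) with a cyclic subgroup of index $p$, which pins $H$ down to $C_{p^k}\times C_p$ or $M_{p^{k+1}}$; your power computation $b^{-1}a^{p^{k-2}}b=a^{p^{k-2}}$ for $k\ge 3$ then locates $C_{p^2}\times C_p$ inside the modular groups of larger order. Your route avoids both the maximal-class dichotomy and the Berkovich exercise, replacing them by a single textbook theorem (which should be cited explicitly, e.g.\ Huppert, Satz~I.14.9, or Berkovich, Theorem~1.2), treats all cases uniformly, and makes visible exactly where oddness of $p$ enters: for $p=2$ the same classification also yields dihedral, semidihedral and generalized quaternion groups, which is precisely why the paper's Proposition~\ref{2groups} is necessarily weaker. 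The paper's route, conversely, stays within tools it has already developed (Corollary~\ref{th_claw_abelian}, Remark~\ref{p2p}) plus standard facts on groups of maximal class. The only points worth tightening in your write-up are the citation just mentioned and a one-line justification of $k\ge 2$ (the cyclic maximal subgroup $L$ contains $x$, so $|L|\ge p^2$).
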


\begin{proof}Assume that $G$ is not cyclic and contains an element $g$ of order $p^2$  and that $P^*(G)$ is claw-free. By the previous corollary, $G$ is non-abelian. 
First assume that $G$ is not of maximal class. By \cite[Exercise 5, \S 10]{berk}, there exists a non-cyclic abelian subgroup $H$ of $G$ that contains $g.$ By Remark \ref{p2p},
$P^*(H)$ contains a claw, so we get a contradiction. So we may assume that $G$ is of maximal class. But then $G$ contains a normal subgroup $N$ with $N\cong C_p\times C_p.$ Let $K=NC,$ with $C=\langle g\rangle$. It cannot be
$N\cap C=1$. Indeed in this case $(Z(K)\cap N)C$ is a non-cyclic abelian group of exponent $p^2$ and its reduce power graph contains a claw. We remains with the case when $K$ is a non-abelian $p$-group of exponent $p^2.$ But then $K \cong C_{p^2}\rtimes C_{p}=\langle a, b | a^{p^2}=b^{p}=1, bab^{-1}=a^{p+1}\rangle$ and $P^*(K)$ contains the claw $(a, ab, a^{2}b\to a^{p})$.
\end{proof}

It is possible to prove a weaker result when $p=2.$

\begin{proposition}\label{2groups}
	Let $G$ be a non-cyclic $2$-group. If $P^*(G)$ is claw-free then either $G$ has exponent at most 4 or $G$ is a dihedral group. 
\end{proposition}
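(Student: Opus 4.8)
The plan is to mirror, for $p=2$, the argument just given for odd $p$, adjusting it to the richer list of maximal class $2$-groups. Assume $P^*(G)$ is claw-free; since $G$ is non-cyclic it suffices to show that if $\exp(G)\geq 8$ then $G$ is dihedral. So suppose $\exp(G)\geq 8$ and fix an element $g\in G$ of order $8$ (a suitable power of an element of order $\geq 8$). As in the odd case I would split according to whether or not $G$ is of maximal class.

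First I would dispose of the non-maximal-class case. By \cite[Exercise 5, \S 10]{berk} there is a non-cyclic abelian subgroup $H$ of $G$ containing $g$. Being non-cyclic abelian and containing an element of order $8$, $H$ must contain a subgroup isomorphic to $C_8\times C_2$. Since $P^*(C_2\times C_8)$ contains a claw of the second type (as observed above), and $P^*(H)$ is the subgraph of $P^*(G)$ induced on the non-identity elements of $H$, this claw is induced in $P^*(G)$, contradicting claw-freeness. Hence $G$ is of maximal class.

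Next I would invoke the classification of $2$-groups of maximal class (see \cite{berk}): such a group is dihedral, generalized quaternion, or semidihedral. Because $\exp(G)\geq 8$ we have $|G|\geq 16$, so $G$ is $D_{2^n}$, $Q_{2^n}$ or $SD_{2^n}$ with $n\geq 4$. The key point is that both $Q_{2^n}$ and $SD_{2^n}$ (for $n\geq 4$) contain a subgroup isomorphic to $Q_8$: for $Q_{2^n}=\langle a,b\mid a^{2^{n-1}}=1,\ b^2=a^{2^{n-2}},\ bab^{-1}=a^{-1}\rangle$ one takes $\langle a^{2^{n-3}},b\rangle$, and for $SD_{2^n}=\langle a,b\mid a^{2^{n-1}}=b^2=1,\ bab^{-1}=a^{2^{n-2}-1}\rangle$ one checks that $c=a^{2^{n-3}}$ and $d=ab$ satisfy $c^4=1$, $d^2=c^2$ and $dcd^{-1}=c^{-1}$, so that $\langle c,d\rangle\cong Q_8$. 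Finally $P^*(Q_8)$ is not claw-free: writing $z$ for the unique involution and $i,j,k$ for three elements generating the three distinct cyclic subgroups of order $4$, the quadruple $(i,j,k\to z)$ is a claw of the second type, since $\langle i\rangle,\langle j\rangle,\langle k\rangle$ are pairwise incomparable while each contains $z=i^2=j^2=k^2$. As $Q_8\leq G$, this claw is induced in $P^*(G)$, so $G$ can be neither quaternion nor semidihedral, leaving $G\cong D_{2^n}$ dihedral.

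I expect the main obstacle to be the maximal-class step: one must correctly quote the classification of maximal class $2$-groups and verify by explicit computation that $Q_8$ embeds in both $Q_{2^n}$ and $SD_{2^n}$, keeping careful track of the defining relations so that the exhibited generators really satisfy the quaternion presentation (the reduction $2^{2n-5}\equiv 0 \pmod{2^{n-1}}$ for $n\geq 4$ is what makes the semidihedral computation work). A secondary point needing care is the non-maximal-class step, where I rely on the cited exercise to produce a non-cyclic abelian subgroup containing the prescribed order-$8$ element $g$ rather than merely some element of order $4$: only an order-$8$ element forces the claw-bearing $C_8\times C_2$, whereas a $C_4\times C_2$ would be claw-free and would not suffice.
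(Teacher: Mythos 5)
Your maximal-class analysis is sound: the classification of maximal class $2$-groups into dihedral, generalized quaternion and semidihedral groups is correctly invoked, your embeddings of $Q_8$ into $Q_{2^n}$ and into $SD_{2^n}$ for $n\geq 4$ check out, and $(i,j,k\to i^2)$ is indeed a claw of the second type in $P^*(Q_8)$; this part parallels cases 1) and 2) of the paper's proof. The genuine gap is in your non-maximal-class step. The exercise you quote from Berkovich is the one the paper uses for \emph{odd} $p$, and the statement you actually need --- in a $2$-group that is not of maximal class, every element of order $8$ lies in a non-cyclic abelian subgroup --- is false. The modular group $M_{16}=\langle a,b\mid a^8=b^2=1,\ a^b=a^5\rangle$ is a counterexample: it has class $2$ (so it is not of maximal class), exponent $8$, and it is not dihedral, yet $C_{M_{16}}(a)=\langle a\rangle$ and $C_{M_{16}}(ab)=\langle ab\rangle$, so no element of order $8$ lies in any non-cyclic abelian subgroup; in particular $M_{16}$ contains no copy of $C_8\times C_2$. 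The same happens in the modular groups $M_{2^n}$ for every $n\geq 4$. Your case distinction therefore never reaches these groups, and they are precisely the groups that must be shown to contain a claw for the proposition to be true.

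This is exactly why the paper does not mirror the odd-$p$ argument but instead invokes \cite[Proposition 10.30]{berk}: a $2$-group containing an element $x$ of order $8$ is dihedral, generalized quaternion, semidihedral, or contains a second element $y$ of order $8$ with $|\langle x\rangle\cap\langle y\rangle|=4$. In the last case $\langle x,y\rangle$ has order $16$ and is isomorphic either to $C_8\times C_2$ (your claw) or to $M_{16}$, where a claw must be exhibited by hand: $(a,ab,a^2b\to a^4)$ works, since $(ab)^4=(a^2b)^2=a^4$ while $\langle a\rangle$, $\langle ab\rangle$, $\langle a^2b\rangle$ are pairwise incomparable. To repair your proof you would have to add an argument covering the modular $2$-groups (or replace the cited exercise by a dichotomy valid at $p=2$, such as the one in \cite[Proposition 10.30]{berk}); as written, the argument is incomplete rather than merely differently organized.
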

\begin{proof}
Assume that $G$ is a non-cyclic $2$ group containing an element $x$ of order $8.$  By \cite[Proposition 10.30]{berk} either $G$ is a dihedral group (in which case it can be easily checked that $P^*(G)$ is claw-free) or one of the following holds:

\noindent 1) $G \cong {\rm{Q}}_{2^{n+1}}$ is a generalized quaternion group of order $2^{n+1},$ with $n\geq 3.$ In this case $G$ contains a cyclic normal subgroup $N$ of order $2^n$, a unique involution and all the elements in $G\setminus N$ have order 4. Thus $P^*(G)$ contains a claw.

\noindent 2) $G \cong {\rm{SD}}_{2^{n+1}}$ is a semidihedral group of order $2^{n+1},$ with $n\geq 3.$ In this case $G$ contains a subgroup $H$ isomorphic to  ${\rm{Q}}_{2^{n}}$, so $P^*(H)$ contains a claw.

\noindent 3) $G$ contains an elements $y$ of order $8$ such that $\langle x\rangle \cap \langle y \rangle$ has order 4. 
Let $K=\langle x,y\rangle.$ Then $K$ is a group of order 16, containing two different cyclic subgroups of order 8. There are two possibilities: $K\cong C_8\times C_2$ or $K\cong \langle a, b \mid a^8=1, b^2=1, a^b=a^5\rangle.$ In the first case we already know that $P^*(K)$ is not-claw free. In the second case, it can be easily checked that $(a,ab,a^2b\to a^4)$ is a claw in $P^*(K).$
\end{proof}

We need the following lemma to obtain more information on the 2-groups of exponent 4 whose reduced power graph is claw-free.

\begin{lemma}
Suppose that $G$ is a finite 2-group of exponent 4. If $P^*(G)$ is claw-free, then $G$ is $Q_8$-free.
\end{lemma}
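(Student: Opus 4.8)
The plan is to prove the contrapositive: if a finite $2$-group $G$ of exponent $4$ contains a subgroup isomorphic to $Q_8$, then $P^*(G)$ contains a claw. So suppose $Q=\langle i,j\mid i^4=1, i^2=j^2, jij^{-1}=i^{-1}\rangle\cong Q_8$ is a subgroup of $G$. The key structural feature of $Q_8$ is that it has a unique involution $z=i^2=j^2=(ij)^2$, and three cyclic subgroups of order $4$, namely $\langle i\rangle, \langle j\rangle, \langle ij\rangle$, which pairwise intersect exactly in $\langle z\rangle$. The first thing I would check is whether $Q$ alone already produces a claw inside $P^*(G)$.

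Inside $Q$ itself, the natural claw candidate is the central vertex $z$ together with three pendants, one from each of the cyclic subgroups of order $4$. Concretely I would try $(i,j,ij\to z)$: each of $i,j,ij$ has order $4$ with $i^2=j^2=(ij)^2=z$, so in $\overrightarrow{P}(G)$ there are arcs $i\to z$, $j\to z$, $ij\to z$, giving a claw of the second type provided $i,j,ij$ are pairwise non-adjacent in $P^*(G)$. Pairwise non-adjacency is exactly the statement that no one of $\langle i\rangle,\langle j\rangle,\langle ij\rangle$ contains another, which holds because these are three distinct subgroups of the same order $4$. So this already shows $(i,j,ij\to z)$ is a genuine claw in $P^*(Q)\subseteq P^*(G)$, and we are done immediately.

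The subtle point — and the step I expect to be the real content — is making sure the pendants are genuinely pairwise non-adjacent \emph{in $P^*(G)$} and not merely in $P^*(Q)$; adjacency in the power graph is an intrinsic property of the two elements (one cyclic subgroup contains the other), so adjacency in $P^*(Q)$ and in $P^*(G)$ coincide for elements of $Q$, and no outside element can create an edge between $i$ and $j$. Thus the ambient group $G$ cannot destroy the claw, and the only thing to verify is the combinatorics inside $Q_8$, which is the elementary computation above. Since the existence of an induced $K_{1,3}$ is inherited by subgroups (an induced subgraph of a claw-free graph is claw-free), exhibiting the claw inside the subgroup $Q\cong Q_8$ suffices to contradict the claw-freeness of $P^*(G)$.

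Consequently, if $P^*(G)$ is claw-free then $G$ can contain no copy of $Q_8$, i.e.\ $G$ is $Q_8$-free, as required. In summary: the whole proof reduces to displaying the single claw $(i,j,ij\to z)$ in $Q_8$ and observing that claws are preserved under passage to subgroups, with no case analysis on the structure of $G$ beyond the hypothesis that it contains a quaternion subgroup. I do not expect the exponent-$4$ hypothesis to play any role in producing the claw itself; it is presumably present only because the lemma is applied in the exponent-$4$ setting established by Proposition \ref{2groups}.
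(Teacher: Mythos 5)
Your argument is correct as far as it goes, but it proves a strictly weaker statement than the lemma. In this paper, ``$Q_8$-free'' carries the meaning it has in the classification results the lemma is designed to feed into (Janko \cite{janko} and Wilkens \cite{wil}, invoked in Proposition \ref{quatfree}): no \emph{section} $X/Y$ of $G$ (a quotient of a subgroup) is isomorphic to $Q_8$. You only exclude \emph{subgroups} isomorphic to $Q_8$. The two conditions genuinely differ, even under the exponent-$4$ hypothesis: the group $G=C_4\rtimes C_4=\langle a,b\mid a^4=b^4=1,\ a^b=a^{-1}\rangle$ has exponent $4$, its three maximal subgroups are all isomorphic to $C_4\times C_2$ (and a $Q_8$ subgroup would have index $2$, hence be maximal), so it contains no copy of $Q_8$; yet $G/\langle a^2b^2\rangle\cong Q_8$. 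Your proof produces no claw for this group, while the lemma demands one (and indeed $(b,ab,a^2b\to b^2)$ is a claw in $P^*(G)$). The obstruction is structural: claws are inherited by induced subgraphs, as you note, but they do \emph{not} lift along quotient maps, so heredity under subgroups cannot be stretched to cover sections.

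The subgroup case you settle is exactly the base case ($Y=1$) of the paper's proof; the real content is an induction on $|G|$ that performs the lifting. If $X/Y\cong Q_8$ with $Y\neq 1$, choose $Z=\langle z\rangle$ of order $2$ inside $Y\cap Z(X)$; by induction $P^*(X/Z)$ contains a claw, necessarily of the second type $(a_1Z,a_2Z,a_3Z\to bZ)$ with each $|a_iZ|=4$ and $|bZ|=2$. The exponent-$4$ hypothesis then forces $|a_i|=4$, hence $\langle a_i\rangle\cap Z=1$; since $\{a_1^2,a_2^2,a_3^2\}\subseteq\{b,bz\}$, two of the squares coincide, say $a_1^2=a_2^2$, and $(a_1,a_2,a_1z\to a_1^2)$ is then a claw in $P^*(G)$. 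So your closing guess that the exponent-$4$ hypothesis ``plays no role in producing the claw'' is backwards: it is precisely what makes this lifting step --- the heart of the proof --- work, by pinning down the orders of the lifted pendants and keeping them disjoint from $Z$.
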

\begin{proof}
	We prove by induction on $|G|$ that if $G$ has exponent 4
	and a section $X/Y$ of $G$ is isomorphic to the quaternion group $Q_8,$ then $P^*(G)$ contains a claw. 
	We may assume $Y\neq 1,$ otherwise $X\cong Q_8$ and $P^*(Y)$ contains a claw. Let $Z=\langle z\rangle$ be a subgroup of order 2 contained in $Y\cap Z(X).$ By induction $P^*(X/Z)$  contains a claw $(a_1Z,a_2Z,a_3Z\to bZ)$. For $1\leq i\leq 3,$ since $4=|a_iZ|\leq |a_i|\leq 4,$ we have $|a_i|=4$, which implies in particular $\langle a_i\rangle \cap Z=1.$ Moreover $b$ has order 2, $\langle b\rangle Z\cong C_2\times C_2$ and $\{a_1^2,a_2^2,a_3^2\}\ \subseteq \{b, bz\}.$ Thus we may assume $a_1^2=a_2^2$, but then $(a_1,a_2,a_1z\to a_1^2)$ is a claw in $P^*(G).$
	\end{proof}
	
Combining the previous lemma with the classification of the quaternion-free 2-group (see \cite{janko} and \cite{wil}), we obtain the following result:

\begin{proposition}\label{quatfree}
Let $G$ be a non-abelian 2-group of exponent 4. If $P^*(G)$ is claw-free, then $G=N\langle x\rangle$, where $N$ is an abelian normal subgroup of $G$. Moreover $|x|\leq 4$ and either $N$ is elementary abelian or $N\leq C_4\times C_4.$
\end{proposition}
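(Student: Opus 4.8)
The plan is to deduce the statement from the preceding lemma together with the known classification of quaternion-free $2$-groups. Since $G$ has exponent $4$ and $P^*(G)$ is claw-free, the preceding lemma shows that $G$ has no section isomorphic to the quaternion group; in particular $G$ has no subgroup isomorphic to $Q_8$, i.e. $G$ is quaternion-free. This is exactly the hypothesis under which the structure theorems of \cite{janko} and \cite{wil} apply, and the first task is to read off from that classification the decomposition $G=N\langle x\rangle$ with $N$ an abelian normal subgroup and $G/N$ cyclic. Concretely, one runs through the families of quaternion-free $2$-groups produced there, imposes the extra constraint $\exp(G)=4$ (which collapses most of the families and makes $|x|\le 4$ automatic, since every element of $G$ has order dividing $4$), and records that each surviving non-abelian group carries an abelian normal subgroup $N$ with a cyclic complement generated by a suitable element $x$.

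Once the decomposition $G=N\langle x\rangle$ is in place, the constraints on $N$ follow from the abelian case already settled. Since $N$ is a subgroup of $G$, the reduced power graph $P^*(N)$ is an induced subgraph of $P^*(G)$ and is therefore claw-free. As $N$ is abelian of exponent dividing $4$, Corollary~\ref{th_claw_abelian} leaves only the possibilities that $N$ is cyclic, elementary abelian, $N\cong C_4\times C_2$, or $N\cong C_4\times C_4$. In every case other than the elementary abelian one we have $N\le C_4\times C_4$, noting that a cyclic group of exponent dividing $4$ and $C_4\times C_2$ both embed in $C_4\times C_4$. This is precisely the dichotomy asserted in the statement, so that part requires no new computation beyond invoking the abelian corollary.

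The main obstacle is the first step: matching the output of the general classification of quaternion-free $2$-groups to the clean normal form $G=N\langle x\rangle$. The delicate point is that the classification is presented as a list of structural families rather than a single uniform description, so some care is needed to verify that, after restricting to exponent $4$, every family either fails to be claw-free---in which case it is discarded using the claw-detecting configurations of Remark~\ref{p2p} and the $2$-group analysis carried out above---or else admits an abelian normal subgroup with cyclic quotient. A secondary subtlety is to guarantee that $N$ can be chosen abelian and normal simultaneously with $\langle x\rangle$ covering $G/N$; here one exploits that the relevant surviving families possess an abelian subgroup of small index, from which a concrete choice of $N$ and of the generator $x$ is produced directly.
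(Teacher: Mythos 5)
Your proposal follows essentially the same route as the paper: preceding lemma $\Rightarrow$ quaternion-freeness, then the classification of quaternion-free $2$-groups from \cite{janko} $\Rightarrow$ the normal form $G=N\langle x\rangle$, then claw-freeness of $P^*(N)$ plus Corollary \ref{th_claw_abelian} $\Rightarrow$ the dichotomy for $N$. Your second half is complete and correct (indeed slightly more explicit than the paper about why the cyclic and $C_4\times C_2$ cases are absorbed into $N\le C_4\times C_4$). The difference lies in the step you yourself flag as ``the main obstacle'' and leave unexecuted: the paper resolves it with two precise citations rather than a family-by-family search. First, by \cite[Proposition 1.6]{janko}, a modular quaternion-free $2$-group of exponent $4$ is abelian, so the hypothesis that $G$ is non-abelian forces $G$ to be non-modular. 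Then \cite[Theorem 1.7]{janko} lists exactly three possibilities $(a)$, $(b)$, $(c)$ for a non-modular quaternion-free $2$-group; case $(c)$ is discarded not by exhibiting claws but simply because it forces an element of order at least $16$, contradicting $\exp(G)=4$; and in cases $(a)$ and $(b)$ the group already has the form $N\langle x\rangle$ with $N$ abelian and normal. So no claw-detecting configurations (Remark \ref{p2p} and the like) are needed at this stage, contrary to what your sketch anticipates; the exponent hypothesis alone does the pruning.

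One caution about terminology, which matters for the validity of your reduction. In \cite{janko}, ``quaternion-free'' means that $G$ has no \emph{section} isomorphic to $Q_8$, and this is exactly what the preceding lemma supplies. Your gloss ``in particular $G$ has no subgroup isomorphic to $Q_8$, i.e.\ $G$ is quaternion-free'' conflates the two notions: having no $Q_8$ subgroup is strictly weaker. For instance, $C_4\rtimes C_4=\langle a,b\mid a^4=b^4=1,\ a^b=a^{-1}\rangle$ has exponent $4$, contains no subgroup isomorphic to $Q_8$ (all three of its maximal subgroups are isomorphic to $C_4\times C_2$), yet its quotient by $\langle a^2b^2\rangle$ is isomorphic to $Q_8$, so it is not quaternion-free in Janko's sense. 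If you only carried the subgroup-free condition forward, the appeal to the classification would be unjustified; you should retain the section formulation, which your first sentence correctly states before weakening it.
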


\begin{proof}Assume that $G$ is a non-abelian 2-group of exponent 4 and that $P^*(G)$ is claw-free. By the previous lemma, $G$ is $Q_8$-free. By  \cite[Proposition 1.6]{janko}, if $G$ were modular, then the condition ${\rm{exp}}(G)=4$ would imply that $G$ is abelian.
So $G$ is non-modular and one of the three conditions $(a), (b), (c)$ described in \cite[Theorem 1.7]{janko} occurs. We may exclude case (c), since  it implies the existence of an element in $G$ of order at least 16. In cases (a) and (b) $G=N\langle x\rangle,$ where $N$ is a normal abelian subgroup.
Since $P^*(N)$ is claw-free, either $N\leq C_4\times C_4$ or $N$ is an elementary abelian 2-group.
 \end{proof}

 Although it is difficult to give a complete description of the finite $p$-groups whose reduced power graph is claw-free, it is easy to describe the nilpotent groups that have this property and are not $p$-groups. For this purpose, the following easy observation is needed.

\begin{remark}\label{ppq}If $G\cong C_p\times C_p\times C_q$, with $p$ and $q$ different primes, then $P^*(G)$ contains a claw of second type.
\end{remark}
\begin{proof}
	The group $G$ contains three different subgroups $\langle a_1\rangle, \langle a_2\rangle, \langle a_3\rangle$ of order $p$ and a subgroup $\langle b\rangle$ of order $q$. It suffices to notice that
	$(a_1b, a_2b, a_3b\to b)$ is a claw in $P^*(G).$
\end{proof}

\begin{proposition}\label{th_claw_nil}
	Suppose that $G$ is a finite nilpotent group and that $P^*(G)$ is claw-free. Then either $G$ is a $p$-group or $G$ is cyclic. In the second case $|G|=p^nq$ where $n\in \mathbb N$ and $p, q$ are distinct primes.
\end{proposition}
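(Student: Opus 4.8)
The plan is to use the decomposition of a finite nilpotent group as the direct product of its Sylow subgroups and to reduce everything to the two remarks on elements of non--prime--power order, together with the classification of $p$-groups having a unique subgroup of order $p$.

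Write $G = P_1 \times \cdots \times P_k$, where $P_i$ is the Sylow $p_i$-subgroup of $G$ and $p_1, \ldots, p_k$ are the distinct prime divisors of $|G|$. First I would bound $k$. If $k \geq 3$, choosing for $i = 1,2,3$ an element $x_i \in P_i$ of order exactly $p_i$, the element $g = x_1 x_2 x_3$ has order $p_1 p_2 p_3$, so by Remark \ref{manyprimes}(1) the graph $P^*(G)$ contains a claw of the first type, a contradiction. Hence $k \leq 2$. If $k = 1$ then $G$ is a $p$-group and we are done, so assume $k = 2$ and write $G = P \times Q$ with $P, Q$ nontrivial of coprime prime-power orders $p^a$ and $q^b$.

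Next I would show that both $P$ and $Q$ are cyclic. Since a subgroup induces an induced subgraph, $P^*(P)$ is claw-free. Suppose, for contradiction, that $P$ is non-cyclic. A finite non-cyclic $p$-group either contains a subgroup isomorphic to $C_p \times C_p$ or has a unique subgroup of order $p$, in which case (being non-cyclic) it is a generalized quaternion $2$-group. The latter possibility is excluded here, because a generalized quaternion group contains a copy of $Q_8$, and $P^*(Q_8)$ already contains the claw $(i, j, k \to z)$, where $z$ is the unique involution; this would contradict that $P^*(P)$ is claw-free. Therefore $P$ contains $C_p \times C_p$, and since $Q$ contains an element of order $q$, the group $G$ contains a subgroup isomorphic to $C_p \times C_p \times C_q$. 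By Remark \ref{ppq} this subgroup, and hence $G$, has a claw of the second type, a contradiction. Thus $P$ is cyclic, and by the symmetric argument (Remark \ref{ppq} being symmetric in the two primes) so is $Q$; consequently $G = P \times Q$ is cyclic.

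Finally, writing $G \cong C_{p^a} \times C_{q^b}$, I would invoke Remark \ref{manyprimes}(2): if both $a \geq 2$ and $b \geq 2$, then $G$ contains an element of order $p^2 q^2$, producing a claw of the first type. Hence $\min\{a,b\} = 1$, and after renaming the primes we obtain $|G| = p^n q$ with $n \in \mathbb N$, as claimed. The only delicate point in this argument is the generalized quaternion exception to the implication ``non-cyclic $p$-group contains $C_p \times C_p$''; this is exactly where the claw-freeness of $P^*(P)$ is used, via the explicit claw inside $Q_8$, to rule that case out.
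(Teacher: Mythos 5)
Your proof is correct and follows essentially the same route as the paper's: bound $\pi(G)$ by $2$ via Remark \ref{manyprimes}(1), use the dichotomy ``non-cyclic $p$-group contains $C_p\times C_p$ or is generalized quaternion'' together with Remark \ref{ppq} to force both Sylow subgroups to be cyclic, and finish with Remark \ref{manyprimes}(2). The only difference is that you make explicit (via the claw $(i,j,k\to z)$ in $Q_8$) what the paper dismisses as ``easily seen'' in the generalized quaternion case, which is a welcome detail rather than a deviation.
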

\begin{proof}
	If $\pi(G)\geq 3,$ then $\pi(\langle g\rangle)= 3$ for some $g\in G,$ and it follows from Remark \ref{manyprimes} (1) that $P^*(G)$ contains a claw. Hence we may assume that
	$G\cong P\times Q,$ where $P$ is a $p$-group and $Q$ is a $q$-group. Suppose that $Q$ is not cyclic. Then either $Q$ contains a subgroup isomorphic to $C_q\times C_q$ or $Q$ is a generalized quaternion group. It can be easily seen that if $Q$ is a generalized quaternion group, then $P^*(Q)$ contains a claw. On the other hand if $Q$ contains $C_q\times C_q$ then $G$ contains $C_q\times C_q\times C_p$ and $P^*(G)$ contains a claw by the previous remark. So if $G$ is claw-free then $Q$ (and $P$ for the same reason) is cyclic. Hence $G$ itself is cyclic and the conclusion follows from Remark \ref{manyprimes} (2).
\end{proof}


\begin{proposition}\label{centralizer}Let $G$ be a finite group, $g$ a non-trivial element of $G$ and $C=C_G(g).$ If $P^*(C)$ is claw-free then $\pi(C)\leq 2.$ Moreover if $\pi(C)=2,$ then one of the Sylow subgroups of $C$ is cyclic and normal.
\end{proposition}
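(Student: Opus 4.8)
The plan is to exploit the fact that, since $g$ is a non-trivial central element of $C=C_G(g)$, the group $C$ contains a central element of prime order: fix a prime $p\mid|g|$ and set $z=g^{|g|/p}$, so that $z\in Z(C)$ and $|z|=p$. The whole argument is then driven by one mechanism for manufacturing claws of the second type out of $z$. Namely, if $x_1,x_2,x_3$ are elements of $C$ of order prime to $p$ whose cyclic subgroups $\langle x_1\rangle,\langle x_2\rangle,\langle x_3\rangle$ are pairwise incomparable, then, since $z$ is central and has order coprime to each $|x_i|$, we have $\langle zx_i\rangle=\langle z\rangle\times\langle x_i\rangle$; hence each $\langle zx_i\rangle$ contains $\langle z\rangle$, while an inclusion $\langle zx_i\rangle\subseteq\langle zx_j\rangle$ would force $\langle x_i\rangle\subseteq\langle x_j\rangle$ on passing to $p'$-parts. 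Thus $(zx_1,zx_2,zx_3\to z)$ is a claw. So claw-freeness of $P^*(C)$ says precisely that $C$ has no three pairwise incomparable cyclic $p'$-subgroups.

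First I would apply this to subgroups of prime order. For a prime $s\ne p$, three distinct subgroups of order $s$ are automatically pairwise incomparable, so they would yield a claw; since the number of subgroups of order $s$ is congruent to $1$ modulo $s$, it is either $1$ or at least $s+1\ge 3$, and therefore it equals $1$. Hence every Sylow $s$-subgroup ($s\ne p$) has a unique subgroup of order $s$, so it is cyclic or generalized quaternion; the generalized quaternion case is excluded because $P^*(Q_{2^n})$ already contains a claw (as in the proof of Proposition \ref{2groups}), so each such Sylow subgroup is cyclic. Next I would apply the same mechanism to the Sylow subgroups themselves: if a Sylow $s$-subgroup with $s\ne p$ were not normal, then $n_s\equiv 1\pmod s$ together with $n_s>1$ would give $n_s\ge s+1\ge 3$ distinct cyclic Sylow $s$-subgroups, which are pairwise incomparable and again produce a claw. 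Hence every Sylow $s$-subgroup with $s\ne p$ is cyclic and normal in $C$.

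It remains to bound $\pi(C)$. Suppose $\pi(C)\ge 3$ and pick two primes $q,r\in\pi(C)\setminus\{p\}$. By the previous paragraph the Sylow $q$-subgroup $Q$ and the Sylow $r$-subgroup $R$ are cyclic and normal, so $QR=Q\times R$ is cyclic and contains an element $w$ whose order is divisible by both $q$ and $r$. Since $z$ is central of order $p$ coprime to $|w|$, a suitable power of $zw$ has order $pqr$, and Remark \ref{manyprimes}(1) produces a claw of the first type, a contradiction. Therefore $\pi(C)\le 2$. Finally, if $\pi(C)=2$, say $\pi(C)=\{p,q\}$, the second paragraph already shows that the Sylow $q$-subgroup is cyclic and normal, which is exactly the asserted conclusion.

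The individual steps are short; the part that needs the most care is the reduction turning the existence of several subgroups into an actual claw. Here one must (i) verify that the central element $z$ genuinely upgrades incomparable $p'$-subgroups into a bona fide claw, checking that the three vertices $zx_i$ are distinct from $z$ and pairwise non-adjacent, and (ii) use the Sylow congruences correctly to pass from \emph{more than one} to \emph{at least three}, since a claw really requires three pairwise incomparable subgroups, not merely two. The exclusion of generalized quaternion Sylow subgroups, though routine given Proposition \ref{2groups}, is the one place where a separate structural input is invoked.
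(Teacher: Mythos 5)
Your proof is correct and follows essentially the same route as the paper's: your claw $(zx_1,zx_2,zx_3\to z)$ built from the central element of prime order is exactly the paper's claw $(gy_1,gy_2,gy_3\to g)$, the Sylow-counting argument forcing normality is identical, and the final contradiction via an element of order $pq_1q_2$ together with Remark \ref{manyprimes}(1) is the same. The only cosmetic difference is that you re-derive cyclicity of the Sylow $s$-subgroups ($s\neq p$) from the unique-subgroup-of-order-$s$ criterion plus the exclusion of generalized quaternion groups, whereas the paper simply cites Proposition \ref{th_claw_nil} applied to the nilpotent group $\langle g\rangle Q$ --- but since that proposition's proof rests on exactly these facts, this is an inlining of the same argument rather than a different method.
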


\begin{proof}
It is not restrictive to assume $|g|=p,$ with $p$ a prime. Assume that $q$ is a different prime divisor of $|C|$ and let $Q$ be a Sylow $p$-sybgroup of $C.$ Since $\langle g\rangle Q$ is nilpotent, by Proposition \ref{th_claw_nil}, $Q$ is cyclic.
	Moreover $Q$ is normal in $C,$ otherwise $C$ would contain three different Sylow $q$-subgroups, say $\langle y_1\rangle,
\langle y_2\rangle, \langle y_3\rangle$ and $P^*(C)$ would contain the claw $(gy_1,gy_2,gy_3\to g).$ If $q_1$ and $q_2$ were two different prime divisors of $|C|,$ with $p\notin \{q_1,q_2\},$ then the $q_1$-Sylow subgroup and the $q_2$-Sylow subgroup would both be normal and $C$ would contain an element $y$ of order $q_1q_2.$ But then $gy$ would have order $pq_1q_2$ and $C$ would contain a claw by Remark \ref{manyprimes}.
\end{proof}

\begin{theorem}\label{inde}Assume that $G$ is a finite non-nilpotent group that can be decomposed as direct product of non-trivial factors. If
	$P^*(G)$ is claw free, then there exist two distinct primes $p$ and $q$ such that
	$G=H\times K$ where $H\cong C_p$ and $K$ is the semidirect product of a cyclic $q$-group $Q$  and
	a cyclic $p$-subgroup $P$ of $\aut(Q).$ Moreover either $P\cong C_p$ or $p=2$ and $P\cong C_4.$
\end{theorem}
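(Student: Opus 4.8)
The plan is to exploit the hypothesis that $G=H\times K$ is a non-trivial direct product in the strongest possible way, since Proposition \ref{th_claw_nil} has already settled the nilpotent case and we are assuming $G$ is non-nilpotent. First I would observe that in any non-trivial decomposition $G=A\times B$, if both $A$ and $B$ contain an element of the same prime order $r$, then $A$ and $B$ each contain a subgroup of order $r$, and combined with a suitable third cyclic subgroup one produces either a copy of $C_r\times C_r\times C_s$ (giving a claw by Remark \ref{ppq}) or, more directly, a second-type claw of the shape $(a_1b,a_2b,a_3b\to b)$ as in the proof of Proposition \ref{centralizer}. Hence the orders $|A|$ and $|B|$ must be coprime, so $G$ is the direct product of factors supported on disjoint sets of primes. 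Applying Remark \ref{manyprimes} then forces $\pi(G)\le 2$: if three primes appeared, some cyclic subgroup would have order divisible by three distinct primes and produce a first-type claw.

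So I reduce to $\pi(G)=\{p,q\}$ with $G=P_0\times Q_0$ for a $p$-group $P_0$ and a $q$-group $Q_0$ — but this is exactly the nilpotent situation, which contradicts non-nilpotency unless the decomposition $G=H\times K$ has factors that are not the Sylow subgroups. The resolution is that the coprimeness conclusion above applies to the \emph{given} decomposition $G=H\times K$, not to a Sylow decomposition: because $G$ is non-nilpotent, the primes $p$ and $q$ must genuinely interact inside one of the factors. I would therefore argue that one factor, say $H$, is a $\{p\}$-group and $K$ carries all the non-nilpotent interaction. Since $|H|$ and $|K|$ are coprime and $G$ is non-nilpotent while direct factors of nilpotent groups are nilpotent, $K$ itself must be non-nilpotent; as $\pi(G)\le 2$ we get $\pi(K)=\{p,q\}$ and $\pi(H)=\{p\}$ (up to swapping the roles of $H$ and $K$). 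Now apply Proposition \ref{th_claw_nil}-type reasoning together with the centralizer bound: taking a non-trivial $h\in H$, the centralizer $C_G(h)$ contains $H\times C_K(h)$, and claw-freeness of $P^*(C_G(h))$ via Proposition \ref{centralizer} forces $H$ to be cyclic, hence $H\cong C_p$ (a larger cyclic $p$-group inside an abelian direct factor would, together with the interaction in $K$, yield an element of order $p^2q$ and a claw by Remark \ref{manyprimes}(1), or an order-$p^2q^2$ claw by Remark \ref{manyprimes}(2)).

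For the structure of $K$: since $K$ is a non-nilpotent $\{p,q\}$-group whose reduced power graph is claw-free (as $P^*(K)$ embeds as an induced subgraph of $P^*(G)$), I would show its Sylow subgroups are cyclic and that it is a semidirect product $Q\rtimes P$ with $Q$ a normal cyclic $q$-group and $P$ a cyclic $p$-group. Cyclicity of the Sylow $q$-subgroup $Q$ follows because a non-cyclic $Q$ contains $C_q\times C_q$ (or is generalized quaternion), and combined with an element of order $p$ from $P$ this produces a claw exactly as in Proposition \ref{th_claw_nil}; normality of $Q$ and cyclicity of $P$ follow from the centralizer analysis of Proposition \ref{centralizer} applied to a generator of $P$ (and from the fact that $P$ acts as a group of automorphisms of the cyclic $Q$, so $P$ embeds into the cyclic-or-abelian automorphism group $\aut(Q)$). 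Finally I would pin down $|P|$: since $P$ embeds in $\aut(Q)$ and $Q$ is cyclic, $\aut(Q)$ is cyclic when $q$ is odd, forcing $P\cong C_p$; the only exception is $q=2$ where $\aut(Q)$ can be non-cyclic, but the hypothesis that $H\cong C_p$ already sits outside $K$ means the global element orders are constrained, and a short case analysis using Remark \ref{manyprimes} shows $|P|\le p^2$ with equality possible only when $p=2$, giving $P\cong C_4$. The main obstacle I anticipate is this last step: disentangling exactly when $P$ can be strictly larger than $C_p$ requires care, because one must simultaneously control the element orders arising from the action of $P$ on $Q$ and the presence of the external factor $H\cong C_p$, ensuring no element of order $p^2q$ (first-type claw) or configuration yielding a second-type claw is created — this is where I expect the delicate bookkeeping to lie, and where the restriction to $p=2$, $P\cong C_4$ genuinely emerges rather than being assumed.
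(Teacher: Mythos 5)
Your opening claim --- that in any non-trivial decomposition $G=A\times B$ claw-freeness forces $|A|$ and $|B|$ to have coprime orders --- is false, and it cannot be repaired, because it contradicts the very conclusion of the theorem: there $G=H\times K$ with $H\cong C_p$ and $K=Q\rtimes P$ where $P\neq 1$ is a $p$-group, so the prime $p$ divides both $|H|$ and $|K|$. Concretely, $G=C_2\times S_3\cong D_{12}$ has claw-free reduced power graph and satisfies the conclusion with $p=2$, $q=3$, yet both direct factors contain involutions. Your sketch of the coprimality claim breaks down exactly where you invoke a ``suitable third cyclic subgroup'': from $C_r\leq A$ and $C_r\leq B$ one only gets $C_r\times C_r$, whose reduced power graph is a disjoint union of cliques, hence claw-free; to apply Remark \ref{ppq} or the centralizer argument of Proposition \ref{centralizer} you would need an element of a different prime order $s$ centralizing three distinct subgroups of order $r$, and such an element need not exist (in $D_{12}$ it does not). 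The proposal is then internally inconsistent: after asserting coprimality you conclude $\pi(H)=\{p\}$ and $\pi(K)=\{p,q\}$, which contradicts the coprimality you just asserted; and the attempted ``resolution'' resolves nothing, since coprimality of the given factors together with $\pi(G)\leq 2$ and non-triviality of both factors would force $G$ to be the direct product of a $p$-group and a $q$-group, i.e.\ nilpotent.

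What is missing is the paper's actual key step, which replaces the coprimality claim and makes $p$ \emph{common} to both factors rather than separating them. One fixes a prime $p$ such that $K$ has a non-normal Sylow $p$-subgroup $P$ and proves that $H$ must be a $p$-group: if $H$ contained an element $x$ of prime order $q\neq p$, then $\langle x\rangle\times P$ is nilpotent and claw-free, so $P$ is cyclic by Proposition \ref{th_claw_nil}; non-normality of $P$ then yields three distinct cyclic Sylow $p$-subgroups $\langle y_1\rangle,\langle y_2\rangle,\langle y_3\rangle$ of $K$, and $(xy_1,xy_2,xy_3\to x)$ is a claw. Two further points you gloss over also require the external factor $H$: the faithfulness $C_P(Q)=1$ of the action of $P$ on $Q$ (otherwise $K$ has an element of order $pq$, so $G$ contains $C_p\times C_p\times C_q$, a claw by Remark \ref{ppq}), which is what lets you embed $P$ in $\aut(Q)$ at all; and the final dichotomy $P\cong C_p$ or ($p=2$ and $P\cong C_4$), which comes from applying Corollary \ref{th_claw_abelian} (equivalently, Remark \ref{p2p} and the remark following it) to the abelian subgroup $H\times P\cong C_p\times C_{p^k}$, not from Remark \ref{manyprimes}, which concerns elements whose order involves several distinct primes and says nothing about $p$-groups.
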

\begin{proof}
	Suppose that $G=H\times K,$ with $H\neq 1$ and $K$ not nilpotent. Then, for a suitable prime divisor $p$ of $|K|,$ $K$ contains a Sylow $p$-subgroup, say $P$, which is not normal in $K$. 
	
	We claim that $H$ is a $p$-group. Indeed, assume that $H$ contains an element $\langle x\rangle$ of prime order $q,$ with $q\neq p.$ Then the reduced power graph $\langle x\rangle \times P$ is claw-free, and by Proposition \ref{th_claw_nil}, $P$ is cyclic. Since $P$ is not normal in $K$, $K$ contains three distict Sylow $p$-subgroups $P_1=\langle y_1\rangle, P_2=\langle y_2\rangle, P_3=\langle y_3\rangle.$ But then $P^*(G)$ contains the claw $(xy_1,xy_2,xy_3\to x).$ 
	
	Since $G$ is not nilpotent, there exists a prime $q\neq p$ dividing $|K|.$ Since $K$ centralizes the elements of $H$, it follows from the Proposition \ref{centralizer} and its proof that $q$ is the unique prime divisor of $|K|$ different from $p$ and that the Sylow $q$-subgroup $Q$ of $G$ is cyclic and normal in $C.$

Since $H\times Q$ is nilpotent and its reduced power graph is claw-free, $H$ is a cyclic $p$-group. Moreover we have proved  that $K=QP$ with $P$ a $p$-group and $Q$ a normal cyclic $q$-subgroup. We claim that $|H|=p.$ Otherwise $H$ contains an element $x$ of order $p^2$ and $K$ an element $y$ of order $p$ and an element $z$ of order $q$. In this case
	$(x,xy,x^pz\to x^p)$ is a claw in $P^*(G).$ We have also $C_P(Q)=1,$ otherwise $K$ contains an element of order $pq$ and $C_p\times C_p\times C_q$ is isomorphic to a subgroup of $G.$ Thus $P\leq \aut(Q)$ is cyclic. Finally, since $P^*(H\times P)$ is claw-free, either $P\cong C_p$ or $p=2$ and $P\cong C_4.$
\end{proof}

\section{Solvable groups}\label{solv}
The aim of this section is to prove Theorem \ref{fourprime}.
A key ingredient in the proof is the following property of Frobenius complements. 
\begin{lemma}\cite[Lemma 2.1]{morgan}
	\label{frocomp} 
	Suppose $X$ is a Frobenius complement. Then every Sylow subgroup of $X$ is cyclic
	or generalized quaternion. If $X$ has an odd order, then any two elements of prime order commute, and $X$ is metacyclic.
	If $X$ has an even order, then $X$ contains a unique involution.
\end{lemma}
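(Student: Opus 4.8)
The plan is to reduce everything to the defining feature of a Frobenius complement: $X$ acts on the Frobenius kernel $K$ so that $C_K(x)=1$ for every $1\neq x\in X$ (a fixed-point-free, or FPF, action), and this property is inherited by every subgroup $Y\le X$. Since $|X|$ and $|K|$ are coprime, fixed points survive in quotients, so FPF passes to every chief factor of $K$; hence any subgroup $Y\le X$ acts FPF on some non-zero $\mathbb{F}_\ell$-module $V$ with $\ell\nmid|X|$, and after extending scalars to $\overline{\mathbb{F}_\ell}$ the FPF condition says that $1$ is not an eigenvalue of any $1\neq y\in Y$. The heart of the argument is two exclusions. First, $X$ has no subgroup $A\cong C_p\times C_p$: over $\overline{\mathbb{F}_\ell}$ the abelian coprime group $A$ is diagonalisable, so $V$ splits into eigenlines affording linear characters $\lambda$ of $A$, and every such $\lambda$ has a non-trivial kernel, whose elements then fix that eigenline — contradicting FPF. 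Second, $X$ has no non-abelian subgroup $G_0$ of order $pq$ ($p\neq q$): writing $G_0=\langle a\rangle\rtimes\langle b\rangle$ with $\langle a\rangle\cong C_q$ normal, the irreducible constituents of $V$ over $\overline{\mathbb{F}_\ell}$ are either $1$-dimensional with $a$ in their kernel (so $a$ fixes a vector) or $p$-dimensional, induced from a non-trivial character of $\langle a\rangle$, on which $b$ acts as a $p$-cycle permutation matrix and hence has eigenvalue $1$; either way FPF fails. Combining the two exclusions, every subgroup of $X$ of order $pq$ (with $p,q$ equal or distinct) is cyclic; in particular each Sylow $p$-subgroup contains no $C_p\times C_p$ and therefore, by the classical classification of such $p$-groups, is cyclic when $p$ is odd and cyclic or generalized quaternion when $p=2$. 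This is the first assertion.

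For even order I would prove uniqueness of the involution directly from the two exclusions. If $s\neq t$ are involutions, then $\langle s,t\rangle$ is dihedral of order $2k$ with $k=|st|$. If $k$ is even this group contains a Klein four-group, excluded by the first exclusion; if $k$ has an odd prime divisor $r$, it contains a dihedral, hence non-abelian, subgroup of order $2r$, excluded by the second. Hence $k=1$ and $s=t$.

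For odd order, $X$ has no (even-order) generalized quaternion Sylow subgroup, so all Sylow subgroups are cyclic; the classical structure theorem for groups with cyclic Sylow subgroups then gives that $X$ is metacyclic, say $X=A\rtimes B$ with $A=C_m$ a normal cyclic Hall subgroup, $B=C_n$ cyclic and $\gcd(m,n)=1$. To see that two elements of prime order commute, the crucial claim is that every prime-order element lies in $C_X(A)$. This is immediate for an element whose prime order divides $m$, since it then lies in the normal Hall $\pi(m)$-subgroup $A$. If instead $u$ has prime order $q\mid n$ and failed to centralize $A$, it would induce a non-trivial automorphism of order $q$ of the cyclic group $A$, which (as $q\neq p$, via the structure of $\aut(C_{p^{e}})$) acts non-trivially on the subgroup $\langle c\rangle$ of order $p$ for some prime $p\mid m$; then $\langle c\rangle\rtimes\langle u\rangle$ would be a non-abelian group of order $pq$, contradicting the second exclusion. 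Thus every prime-order element lies in $C_X(A)$, and since $A$ is central in $C_X(A)$ with cyclic quotient $C_X(A)/A\le X/A\cong C_n$, the group $C_X(A)$ is abelian; in particular any two prime-order elements commute.

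I expect the main obstacle to be the last assertion. The two exclusions (especially the induced-module computation ruling out the non-abelian $pq$ case) are the technical core, but turning them into ``any two elements of prime order commute'' requires correctly locating every prime-order element relative to the normal Hall subgroup $A$ and checking that the relevant centralizer is abelian; the $p$-group classification with a unique subgroup of order $p$ and the structure theorem for groups with cyclic Sylow subgroups I would invoke as standard.
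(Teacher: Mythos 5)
The paper does not actually prove this lemma: it is quoted verbatim from \cite[Lemma 2.1]{morgan}, and the result itself is classical (Burnside, Zassenhaus; see e.g.\ Huppert \cite{hu}, Kapitel V, \S 8). So the comparison is between your argument and the standard proofs in the literature, and in that respect your proposal is a faithful reconstruction of the classical route: reduce the fixed-point-free (FPF) action to a linear one; rule out $C_p\times C_p$ by simultaneous diagonalisation over $\overline{\mathbb{F}_\ell}$; rule out nonabelian subgroups of order $pq$ by the induced-module computation (the permutation-matrix eigenvalue $1$); deduce the Sylow statement from the classification of $p$-groups with no subgroup $C_p\times C_p$; get uniqueness of the involution from the dihedral group generated by two involutions; and, in the odd case, combine the structure theorem for groups with all Sylow subgroups cyclic with the observation that every element of prime order must centralise the normal cyclic Hall subgroup $A$, whose centraliser is abelian because $A$ is central in it with cyclic quotient. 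All of these steps check out, and the dihedral argument for the unique involution is a clean way to package that part.

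The one genuine soft spot is the opening reduction. You assert that FPF ``passes to every chief factor of $K$'', hence that every $Y\le X$ acts FPF on a nonzero $\mathbb{F}_\ell$-module. Two things are glossed over here. First, chief factors of $K$ as a group in its own right need not be $X$-invariant; what you need is an $X$-invariant elementary abelian section of $K$. Second, and more seriously, such a section is elementary abelian only if $K$ is solvable (or nilpotent): a priori, a minimal $X$-invariant section of a nonsolvable kernel is a direct product of nonabelian simple groups, there is then no $\mathbb{F}_\ell$-module in sight, and both of your exclusions collapse at that point. The standard repairs are: invoke Thompson's theorem that Frobenius kernels are nilpotent, after which $\Omega_1$ of the $\ell$-part of $Z(K)$ is a nontrivial characteristic elementary abelian subgroup of $K$ on which $X$ acts FPF (no quotient lemma needed at all); or, avoiding Thompson, use Feit--Thompson together with Glauberman's lemma (one of $|X|$, $|K|$ has odd order, hence is solvable, so the coprime action yields an $X$-invariant Sylow $\ell$-subgroup $P$ of $K$, and one takes $\Omega_1(Z(P))$). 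Either way the gap is closed by citing a genuine theorem -- it is exactly the point where deep input enters -- but as written your reduction is not justified.
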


The proof will also use the following remark.

\begin{remark}\label{centr}
	Let $x$ be an element of $G$ of prime order $p.$ If $C_G(x)$ contains three cyclic subgroups $\langle a_1\rangle, \langle a_2\rangle, \langle a_3\rangle$ of prime orders, say $p_1,p_2,p_3,$ and $p\notin \{p_1,p_2,p_3\}$, then $(a_1x,a_2x,a_3x\to x)$ is a claw in $P^*(G).$
\end{remark}

\begin{lemma}\label{pabq}
	Let $G=P\rtimes Q$, where $P$ is a non-cyclic elementary abelian $p$-group and $Q$ is a $q$-group, with $p$ and $q$ different primes. If $P^*(G)$ is claw-free, then $C_P(y)= 1$ for every $1\neq y\in Q.$
\end{lemma}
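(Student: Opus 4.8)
The plan is to prove the contrapositive: assuming some $1\neq y\in Q$ has $C_P(y)\neq 1$, I would construct an explicit claw of second type in $P^*(G)$, contradicting the hypothesis. The natural target element to hang the claw on is an element fixed by $y$, so first I would pick $1\neq v\in C_P(y)$ and also replace $y$ by a suitable power to assume $y$ has order exactly $q$ (replacing $y$ by $y^{q^{k}}$ preserves the property $C_P(y^{q^k})\supseteq C_P(y)\neq 1$, since anything fixed by $y$ is fixed by its powers). Thus I may assume $|y|=q$ and $|v|=p$, with $y$ centralizing $v$, so that $\langle v\rangle\times\langle y\rangle\cong C_p\times C_q$ sits inside $G$ and $vy$ has order $pq$.

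The key idea is to use the non-cyclicity of $P$ to manufacture three incomparable cyclic subgroups that all ``point down'' to a common vertex. Since $P$ is elementary abelian and non-cyclic, there is an element $w\in P$ of order $p$ with $w\notin\langle v\rangle$, and then $\langle v,w\rangle\cong C_p\times C_p$ supplies at least three distinct subgroups of order $p$, say generated by $a_1,a_2,a_3$, with $v\in\{a_1,a_2,a_3\}$ or at least $v$ playing the role of the common neighbour. The cleanest construction is to let the central vertex be $y$ (or $v$) and to form the three pendants by multiplying the three distinct order-$p$ elements by $y$: following the pattern of Remark \ref{centr}, the quadruple $(a_1y,a_2y,a_3y\to y)$ should be a claw, since each $a_iy$ has order $pq$ and powers down to $y$, while $a_iy$ and $a_jy$ for $i\neq j$ are non-adjacent because $\langle a_iy\rangle\cap\langle a_jy\rangle$ meets $P$ trivially (the $p$-parts $a_i,a_j$ generate distinct subgroups) and so neither cyclic group contains the other.

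The steps in order are: (1) reduce to $|y|=q$ and fix $1\neq v\in C_P(y)$; (2) use non-cyclicity of the elementary abelian $P$ to produce three distinct order-$p$ subgroups inside $C_P(y)$-adjacent data, i.e. three elements $a_1,a_2,a_3$ of order $p$ generating pairwise distinct cyclic subgroups, each commuting with $y$; (3) verify each $a_iy$ has order $pq$ with $(a_iy)^q$ a generator of $\langle a_i\rangle$ and $(a_iy)^p$ a generator of $\langle y\rangle$, so that $a_iy\to y$ is an arc in $\overrightarrow{P}(G)$; (4) verify the three pendants are pairwise non-adjacent and non-adjacent-in-the-wrong-direction to $y$ only in the pendant sense, exactly the format of a claw of second type as in Remark \ref{centr}.

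The main obstacle, and the point requiring care, is step (2) together with the non-adjacency checks in step (4): I must ensure that the three chosen order-$p$ elements really do lie in $C_P(y)$ so that $a_iy$ genuinely has order $pq$, which is immediate if all three lie in $C_P(y)$, but $C_P(y)$ could in principle be one-dimensional. If $\dim C_P(y)=1$ I cannot find three order-$p$ elements inside it, so the construction must instead take $y$ itself as one ingredient and exploit that $P$ is non-cyclic \emph{as a whole}: here the right move is to apply Remark \ref{centr} with $x=v$ (the fixed vector, of prime order $p$) and observe that $C_G(v)$ contains $y$ together with the full non-cyclic $p$-group $P$, whence $C_G(v)$ contains three cyclic subgroups of prime order (three distinct order-$p$ subgroups of $P$) plus potentially the order-$q$ element. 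Reconciling these two cases so that the prime orders $p_1,p_2,p_3$ in Remark \ref{centr} avoid the central prime is the delicate bookkeeping; I expect the clean resolution is to centre the claw at $v$ and take the three pendants of the form $a_i y$ where the $a_i\in P$ are distinct order-$p$ elements commuting with $v$, reducing the whole lemma to a direct application of Remark \ref{centr}.
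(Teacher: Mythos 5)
Your reduction to $|y|=q$ and your treatment of the case where $C:=C_P(y)$ is non-cyclic are correct and agree with the paper: three distinct order-$p$ subgroups of $C$ give the claw $(a_1y,a_2y,a_3y\to y)$ via Remark \ref{centr}. The genuine gap is in the case $C=\langle v\rangle$ of order $p$, which is the crux of the lemma. Your proposed resolution --- centre the claw at $v$ and take pendants $a_iy$ with the $a_i\in P$ distinct elements of order $p$ ``commuting with $v$'' --- is not an application of Remark \ref{centr} and fails as stated. First, the condition ``commuting with $v$'' is vacuous since $P$ is abelian, and Remark \ref{centr} requires pendant-building elements of prime order \emph{different} from $|v|=p$ that centralize $v$; your $a_i$ have order exactly $p$. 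Second, the elements $a_iy$ need not be adjacent to $v$ at all: adjacency requires $v\in\langle a_iy\rangle$, which amounts to $(a_iy)^q=a_i\,a_i^{y^{-1}}\cdots a_i^{y^{-(q-1)}}\neq 1$, and this can fail. Concretely, take $P=\langle v\rangle\times\langle w\rangle\cong C_p\times C_p$ with $q\mid p-1$, and let $y$ act trivially on $v$ and by a primitive $q$-th root of unity on $w$; then $wy$ has order $q$ and $\langle wy\rangle\cap\langle v\rangle=1$, so $wy$ is not a legitimate pendant for the centre $v$.

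The missing idea, which is how the paper argues, is to dualize: keep $v$ as the centre but build the pendants from $P$-conjugates of $y$ rather than from elements of $P$. Since $P$ is abelian, $[v,y^z]=[v^{z^{-1}},y]^z=[v,y]^z=1$ for every $z\in P$, so every conjugate $y^z$ lies in $C_G(v)$ and has order $q\neq p$; moreover $N_P(\langle y\rangle)=C_P(y)$ (for $z\in N_P(\langle y\rangle)$ one has $[y,z]\in\langle y\rangle\cap P=1$), so there are exactly $|P:C|$ distinct conjugates of $\langle y\rangle$. If $|P:C|\geq 3$, Remark \ref{centr} yields the claw $(y^{z_1}v,\,y^{z_2}v,\,y^{z_3}v\to v)$. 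This leaves a residual case that your sketch never reaches: $|P:C|\leq 2$. For odd $p$ this forces $P=C$, contradicting non-cyclicity; for $p=2$ it forces $P\cong C_2\times C_2$, and one concludes because an automorphism of $C_2\times C_2$ of odd order fixing an involution is trivial, whence $C=P$, again a contradiction. Without the conjugate construction and this residual case analysis, the lemma is not proved.
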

\begin{proof}
Assume that, for some $1\neq y\in Q$, $C:=C_{P}(y)\neq 1.$ If $C$ is not cyclic, then it contains at least $p+1$ subgroups of order $p$, but then, by Remark \ref{centr}, $P^*(G)$ contains a claw. So $C=\langle x\rangle$, with $|x|=p.$ Moreover $1=[x^z,y^z]=[x,y^z]$ for every $z\in P$: this implies that $|P:C|\leq 2,$ otherwise we have at least $3$ different conjugates of $\langle y\rangle,$ all commuting with $x.$ 
If $p\neq 2,$ we would have $C=P=\langle x\rangle,$ 
against the assumption that $P$ is not cyclic. We reach the same conclusion if $p=2.$ Indeed if $C\neq P$, then $P\cong C_2\times C_2$ and $y$ centralizes a subgroup of $C$ of order 2. 
This would imply that $y$ centralizes $P$ and $C=P,$ a contradiction. 
\end{proof}

\begin{proof}[Proof of Theorem \ref{fourprime}]
	Suppose that $G$ is a minimal counterexample. Since $P^*(G)$ is claw-free, $P^*(H)$ is claw-free for every subgroup $H$ of $G$. In particular $\pi(H)\leq 4$ for every proper subgroup $H$ of $G$. Let $N$ be a maximal normal subgroup of $G$. Then $|G:N|=p_1$ is a prime and $p_1$ does not divide $N$, otherwise $\pi(G)=\pi(N)\leq 4.$ In particular
	$G=N\rtimes C_{p_1}$ and $\pi(N)=4.$ Now let $N/M$ be a chief factor of $G.$ 
	Then $N/M$ is a $p_2$-group for some prime $p_2.$ Moreover, if $P_1$ is a Sylow $p_1$-subgroup of $G$, then $MP_1<G$ and therefore $\pi(MP_1)\leq 4.$ Since $\pi(G)=\pi(MP_1)\cup \{p_2\}$, 
	we must have that $p_2\notin \pi(MP_1)$ and therefore $G=(M\rtimes P_2)\rtimes P_1,$ being $P_2$ a Sylow $p_2$-subgroup of $N.$ Repeating the argument, we may conclude that $G=(((P_5\rtimes P_4)\rtimes P_3)\rtimes P_2)\rtimes P_1$ where $P_i$ is a Sylow $p_i$-subgroup of $G$ and its is elementary abelian; more precisely $P_i$ is isomorphic to a chief factor of $G.$
	
	We distinguish two cases:
	
\noindent	1) There exists $1\neq x \in P_5$ such that $C_G(x)>P_5$.
	Then there exists $y$ of order $q$ (with $q\in \{p_1,p_2,p_3,p_4\}$) such that $[x,y]=1.$ By Lemma \ref{pabq}, $P_5=\langle x\rangle.$ So $X:=C_G(x)=C_G(P_5)$ is a normal subgroup of $G$. By Remark  \ref{centr}, $X$ must contain a unique subgroup of order $q$, so $\langle y \rangle$ is characteristic in $X$ (and therefore normal in $G$). But then be may assume  $P_5P_4=\langle xy \rangle.$ Now consider $Y=C_G(xy)$: it must be $Y=\langle x,y\rangle$ (otherwise we would have in $G$ an element of order divisible by 3 different primes). This $G/C_G(xy)\cong P_3P_2P_1$ is abelian, a contradiction.
	
\noindent	2) $H=P_4P_3P_2P_1$ acts fixed point freely on $P_5$. 
	But then $H$ is a Frobenius complement, and we may apply Lemma \ref{frocomp}. If $|H|$ is even, then $H$ contains a unique involution and this involution commutes with all the elements of $H$ of prime order In this case we may assume $p_4=2$. If $|H|$ is odd, then
	every two elements of $H$ of prime order commute. In any case, $G$ would contain cyclic subgroups
	of order $p_4p_3,$ $p_4p_2,$ $p_4p_1$, containing the same subgroup of order $p_4.$
\end{proof}

In order to see that the bound $\pi(G)\leq 4$ in Theorem \ref{fourprime} is best possible, we analyze what happens if $|G|=p_1p_2p_3p_4$, with $p_1, p_2, p_3, p_4$ four distinct primes. 

\begin{proposition}\label{squarefree}Suppose that the order of $G$ is the product of four distinct primes.
	Then $P^*(G)$ is claw-free if and only if $G\cong C_{p_1p_2}\rtimes C_{p_3p_4}$, where $p_1, p_2, p_3, p_4$ are distinc primes, and the centralizer in $G$ of every non-trivial element of $C_{p_1p_2}$ coincides with
	$C_{p_1p_2}.$ 
\end{proposition}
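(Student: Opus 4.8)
The plan is to prove both directions of the equivalence. Since $|G| = p_1 p_2 p_3 p_4$ is squarefree, $G$ is solvable (by Burnside, or since every group of squarefree order is supersolvable), and every Sylow subgroup is cyclic of prime order. First I would establish necessary structural constraints on $G$ assuming $P^*(G)$ is claw-free. The key observation is that no element of $G$ can have order divisible by three distinct primes: by Remark \ref{manyprimes}(1), if some $g$ had order $p_ip_jp_k$, then $(g \to g^{p_i}, g^{p_j}, g^{p_k})$ would be a claw. Thus every cyclic subgroup has order a product of at most two primes. I would then use the fact that a group all of whose Sylow subgroups are cyclic is metacyclic (a Z-group), so $G = A \rtimes B$ where $A$ and $B$ are cyclic of coprime order. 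The constraint on element orders forces $|A|$ and $|B|$ to each be a product of exactly two of the four primes, giving the shape $C_{p_1p_2} \rtimes C_{p_3 p_4}$ after relabeling.

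Next I would derive the centralizer condition. Writing $A = C_{p_1 p_2}$ for the normal cyclic Hall subgroup and $B = C_{p_3 p_4}$ for the complement, I must show that for every nontrivial $a \in A$, one has $C_G(a) = A$. Suppose instead some $1 \neq a \in A$ is centralized by a nontrivial element $y \in B$ of prime order, say $|y| = p_3$. Then $\langle a, y\rangle$ is abelian containing elements whose orders, combined, would produce an element of order divisible by $p_3$ times one of $p_1, p_2$. The danger is that $ay$ then has order divisible by three distinct primes if $|a| = p_1 p_2$, immediately giving a claw by Remark \ref{manyprimes}(1); and even when $|a|$ is a single prime, I would apply Remark \ref{centr} to the element $y$ of order $p_3$: if $C_G(y)$ contains three cyclic subgroups of prime orders avoiding $p_3$, we get a claw. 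I would organize this by invoking Proposition \ref{centralizer}: since $P^*(C_G(a))$ is claw-free, $\pi(C_G(a)) \leq 2$, and combined with $a \in A$ this pins down $C_G(a) = A$.

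For the converse, I would assume $G \cong C_{p_1 p_2} \rtimes C_{p_3 p_4}$ with the stated centralizer condition and show directly that $P^*(G)$ contains no claw. The centralizer condition $C_G(a) = A = C_{p_1p_2}$ for all nontrivial $a \in A$ means that $G$ acts as a Frobenius group with kernel $A$ and complement $B = C_{p_3 p_4}$. The crucial consequence is a tight description of element orders and of which cyclic subgroups contain a given element: every element of $G$ lies in a unique maximal cyclic subgroup, or more precisely the possible element orders are divisors of $p_1 p_2$ or of $p_3 p_4$ only (no mixed orders $p_i p_j$ with $p_i \mid |A|$, $p_j \mid |B|$ can occur, precisely because of the Frobenius/centralizer condition). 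I would verify that a claw $(a_1, a_2, a_3 \to b)$ of the second type cannot arise, by checking that any element $b$ lies below at most two incomparable cyclic subgroups, and that no claw of the first type $(b \to a_1, a_2, a_3)$ arises since $|b|$ is never a product of three primes.

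The main obstacle I anticipate is the careful case analysis in the converse, specifically ruling out claws of the second type. Controlling exactly how many distinct cyclic subgroups contain a fixed nontrivial element $b$ requires exploiting the Frobenius action precisely: I expect that the fixed-point-free action of $B$ on $A$ limits the conjugates and the structure enough that any vertex has at most two ``upward'' maximal cyclic overgroups that are pairwise incomparable, which defeats the three-pendant requirement. Making this count rigorous, rather than merely plausible, is where the real work lies; the forward direction, by contrast, follows fairly mechanically from Remarks \ref{manyprimes} and \ref{centr} together with Proposition \ref{centralizer}.
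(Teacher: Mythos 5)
Your forward direction is correct, and it takes a genuinely different route from the paper's. The paper works with the Fitting subgroup $F=F(G)$: it shows $\pi(F)\le 2$, kills the case $|F|=p_1$ (there $G/F\le \aut(C_{p_1})$ is cyclic, forcing an element of order $p_2p_3p_4$), and in the case $|F|=p_1p_2$ gets $G=F\rtimes C_{p_3p_4}$ from $C_G(F)=F$ and the fact that $\aut(F)$ is abelian. You instead invoke the Z-group theorem (all Sylow subgroups cyclic implies $G=A\rtimes B$ with $A=G'$ and $B$ cyclic of coprime orders), and Remark \ref{manyprimes}(1) then forces the $2+2$ split of the primes, since a cyclic factor of order divisible by three primes would immediately give a claw. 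From there your derivation of the centralizer condition coincides with the paper's: by Proposition \ref{centralizer}, $\pi(C_G(a))\le 2$; since $A\le C_G(a)$ and $|G|$ is squarefree, $C_G(a)=A$. Your route buys a faster structural reduction at the price of citing a stronger classification theorem, where the paper needs only solvability and $C_G(F(G))\le F(G)$.

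The backward implication, however, has a genuine gap, which you yourself flag: you never prove that each vertex has at most two pairwise incomparable cyclic overgroups, you only say you \emph{expect} the Frobenius action to yield this. The missing idea is short. Every cyclic subgroup containing a nontrivial element $x$ is abelian and hence lies in $C_G(x)$. The hypothesis gives $C_G(x)=A$ for $1\ne x\in A$; and if $x\notin A$, then $C_G(x)\cap A=1$ (any $1\ne a\in C_G(x)\cap A$ would force $x\in C_G(a)=A$), so $C_G(x)$ embeds in $G/A\cong C_{p_3p_4}$ and is cyclic of order dividing $p_3p_4$. Thus every centralizer of a nontrivial element is cyclic of order dividing $p_1p_2$ or $p_3p_4$. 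In such a cyclic group the subgroups containing a fixed nontrivial subgroup form a chain, so a second-type claw $(a_1,a_2,a_3\to b)$ is impossible (two of the $\langle a_i\rangle$, all lying in $C_G(b)$ above $\langle b\rangle$, would be comparable), and a first-type claw $(b\to a_1,a_2,a_3)$ is impossible because a cyclic group of order dividing $p_ip_j$ has at most two pairwise incomparable subgroups. The paper closes this direction differently, by enumerating all nontrivial subgroups of $G$ ($F$, $P_1$, $P_2$, and $p_1p_2$ subgroups of each of the orders $p_3$, $p_4$, $p_3p_4$) and observing that every subgroup of prime order has a unique proper cyclic overgroup. Either repair is only a few lines, but as written your proof of this direction is incomplete precisely at the decisive step.
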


\begin{proof} Since all the Sylow subgroups of $G$ are cyclic, $G$ is solvable (see \cite[Corollary 5.15]{isa}). Let $F$ be the Fitting subgroup of $G$ and assume that $P^*(G)$ is claw-free. Then $F$ is cyclic, so, since $P^*(F)$ is claw-free, we must have $\pi(F)\leq 2.$ Thus we may assume that either $|F|=p_1$ or $|F|=p_1p_2.$ 
	
	First assume  $|F|=p_1.$ By \cite[5.4.4]{Robinson}, $C_G(F)=F$, so $G/F$ is isomorphic to a subgroup of $\aut(F)$. But $\aut(C_{p_1})$ is cyclic, so $G/F$ is cyclic of order $p_2p_3p_4$. Hence $G$ contains an element of order $p_2p_3p_4$, in contradiction with Remark \ref{manyprimes} (1).
	
	Assume now that $|F|=\langle x \rangle$ is cyclic of order $p_1p_2.$ Again we use the fact that $C_G(F)=F$ to deduce that $G/F$ is an abelian group. This implies that $G=F\rtimes K$, with $K\cong C_{p_3}C_{p_4}.$ Now let $1\neq y\in F.$ By Proposition \ref{centralizer},
	$\pi(C_G(y))\leq 2$. Since $F\leq C_G(y),$ we deduce $F=C_G(y).$

	Conversely assume $C_G(P_1)=C_G(P_2)=F$.
	Then the nontrivial subgroups of $G$ are precisely the following: $F, P_1, P_2,$  $p_1p_2$ subgroups of order $p_3$, $p_1p_2$ subgroups of order $p_4$ and $p_1p_2$ subgroups of order $p_3p_4.$
	In particular every subgroup of prime order in $G$ is property contained in a unique cyclic subgroup of $G$ and therefore  $P^*(G)$ is claw-free. 
\end{proof}

\section{Non solvable groups}\label{unso}


Before to prove Theorem \ref{unsolva} we need some  preliminary  results. The following is an immediate consequence of Proposition \ref{centralizer}.

\begin{proposition}\label{nocenter}
	Let $G$ be a finite group. If $P^*(G)$ is claw-free and $\pi(G)\geq 3$, then $Z(G)= 1.$ In particular if $P^*(G)$ is claw-free and $G$ is not solvable, then $Z(G)=1.$
\end{proposition}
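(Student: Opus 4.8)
The plan is to derive the statement directly from Proposition \ref{centralizer}, which already controls the prime divisors of centralizers. The key observation is that if $Z(G)\neq 1$, then there is a non-trivial central element $g$, and its centralizer is the whole group: $C_G(g)=G$. Applying Proposition \ref{centralizer} to this $g$ would then force $\pi(C_G(g))=\pi(G)\leq 2$, which contradicts the hypothesis $\pi(G)\geq 3$.

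So the first step is to assume for contradiction that $Z(G)\neq 1$ and pick any $1\neq g\in Z(G)$. Since $g$ is central, every element of $G$ commutes with $g$, whence $C_G(g)=G$. As $P^*(G)$ is claw-free and $C_G(g)=G$, the hypothesis of Proposition \ref{centralizer} is satisfied (we should check that the claw-freeness of $P^*(C_G(g))$ is exactly the claw-freeness of $P^*(G)$ here, which is immediate since $C_G(g)=G$). The conclusion of that proposition gives $\pi(C_G(g))\leq 2$, i.e. $\pi(G)\leq 2$, contradicting $\pi(G)\geq 3$. This establishes the first sentence.

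For the second sentence I would invoke the fact that a non-solvable finite group always has order divisible by at least three distinct primes; indeed, by the Feit--Thompson theorem any group of even order that is non-solvable has even order, and Burnside's $p^aq^b$ theorem guarantees that any group whose order has at most two prime divisors is solvable. Hence if $G$ is not solvable then $\pi(G)\geq 3$, and the first part applies to give $Z(G)=1$.

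I do not anticipate any serious obstacle: the entire argument is essentially a one-line application of Proposition \ref{centralizer} to a central element, together with the standard Burnside $p^aq^b$ criterion to guarantee $\pi(G)\geq 3$ in the non-solvable case. The only thing to be careful about is the trivial-but-necessary verification that taking $g$ central makes $C_G(g)=G$, so that the centralizer proposition transfers its conclusion to all of $G$ rather than to a proper subgroup.
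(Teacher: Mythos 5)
Your proof is correct and is essentially the paper's own argument: the paper states this proposition as an immediate consequence of Proposition \ref{centralizer}, applied to a non-trivial central element $g$ (for which $C_G(g)=G$, forcing $\pi(G)\leq 2$), with Burnside's $p^aq^b$ theorem supplying $\pi(G)\geq 3$ in the non-solvable case. Your parenthetical appeal to Feit--Thompson is garbled (the sentence is circular) and unnecessary, but harmless, since the Burnside criterion you also cite is exactly what is needed.
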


\begin{proposition}
	\label{genfit}
	If $P^*(G)$ is claw-free, then either $G$ is almost simple or $F(G)=F^*(G)$, where $F(G)$ and $F^*(G)$ are the Fitting subgroup and the generalized Fitting subgroup of $G$ respectively.
	\begin{proof}
		Suppose $F^*(G)\neq F(G).$ Then $G$ contains a subnormal quasisimple subgroup $X.$ By Proposition \ref{nocenter} $Z(X)=1$ and therefore  $X$ is a non-abelian simple group. Moreover $F^*(X)=XC,$ with $C=C_{F^*(G)}(X).$ It follows from Theorem \ref{inde} that $C=1.$ 
		So we conclude that $X=F^*(G)$ is a non-abelian simple group and therefore $G$ is almost simple.
	\end{proof}
\end{proposition}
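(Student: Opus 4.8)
The plan is to argue at the level of the layer. Recall that $F^*(G)=F(G)E(G)$, where $E(G)$ is the product of the components of $G$ (its subnormal quasisimple subgroups), and that $F^*(G)=F(G)$ holds precisely when $E(G)=1$, i.e. when $G$ has no components. So I would assume $F^*(G)\neq F(G)$, fix a component $X$ of $G$, and aim to deduce that $G$ must be almost simple.

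The first step is to upgrade $X$ from quasisimple to genuinely simple. Since $X$ is a subgroup of $G$, the graph $P^*(X)$ is an induced subgraph of $P^*(G)$ and hence is itself claw-free (this hereditary property was already used for subgroups earlier). As $X$ is quasisimple it is in particular non-solvable, so Proposition \ref{nocenter} applies to $X$ and gives $Z(X)=1$. Because $X/Z(X)$ is simple, this forces $X$ to be a non-abelian simple group.

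Next I would control the centralizer $C:=C_{F^*(G)}(X)$. Since $X$ is simple we have $X\cap C=Z(X)=1$, so $XC=X\times C$. If $C\neq 1$, then $X\times C$ is a non-nilpotent group (it contains the non-abelian simple group $X$) that splits as a nontrivial direct product and whose reduced power graph, being an induced subgraph of $P^*(G)$, is claw-free. Theorem \ref{inde} then forces $X\times C$ to have the shape $H\times K$ with $H\cong C_p$ and $K$ a semidirect product of two cyclic groups, hence in particular solvable, which is absurd since $X\times C$ is non-solvable. I expect this to be the crux of the argument: the real content is that claw-freeness is extraordinarily restrictive for direct products, and Theorem \ref{inde} is exactly the tool that forbids a non-solvable factor from coexisting with a nontrivial complement. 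This contradiction yields $C=1$.

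Finally, the conclusion should follow quickly from $C=1$. Components centralize the Fitting subgroup, so $F(G)\leq C_{F^*(G)}(X)=1$; and any component other than $X$ would centralize $X$ and thus again contribute to $C$, so $X$ is the unique component. Hence $F^*(G)=E(G)=X$ is non-abelian simple. Using the standard inclusion $C_G(F^*(G))\leq F^*(G)$ we obtain $C_G(X)=Z(X)=1$, so conjugation embeds $G$ into $\aut(X)$ with the image containing the inner automorphisms $\mathrm{Inn}(X)\cong X$; that is, $\soc(G)=X$ is non-abelian simple and $G$ is almost simple, as required.
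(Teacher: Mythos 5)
Your proof is correct and follows essentially the same route as the paper's: invoke Proposition \ref{nocenter} to get $Z(X)=1$ for a component $X$, then use Theorem \ref{inde} on the direct product $X\times C_{F^*(G)}(X)$ to force the centralizer to be trivial, and conclude via $C_G(F^*(G))\leq F^*(G)$. You simply spell out the details the paper leaves implicit (why $XC$ is a direct product, why Theorem \ref{inde} yields a solvability contradiction, and why $C=1$ forces $F^*(G)=X$), which is a faithful and complete rendering of the same argument.
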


\begin{proof}[Proof of Theorem \ref{unsolva}]
	Let $G$ be a minimal counterexample. In particular $G$ is not almost simple, so by Proposition \ref{genfit}  $F^*(G)=F(G).$ We will use several times in the proof that
	this implies $F(G)\neq 1$ and $C_G(F(G)) = Z(F(G)).$

Let $R(G)$ be the solvable radical of $G$ and $Q$ a Sylow subgroup of $R(G).$ By the Frattini argument, $G=R(G)N_G(Q).$ Assume that $N_G(Q)<G.$ Then $N_G(Q)$ is not solvable, and $P^*(N_G(Q))$ is claw-free, hence by the minimality of $G$, it follows that $N_G(Q)$ is almost simple. But this is not possible, since $Q$ is normal in $N_G(Q).$
	 Thus all the Sylow subgroups of $R(G)$ are normal in $G$. But then $R(G)$ is nilpotent and therefore $F(G)=R(G).$ 
		If $F(G)$ is not a $p$-group, then by Proposition \ref{th_claw_nil}, $F(G)$ is cyclic. Since $C_G(F(G))\leq F(G),$ we would have that $G/F(G)\leq \aut(F(G))$ is abelian. Thus $G$ is solvable, a contradiction. So $F(G)$ is a $p$-group, for a suitable prime $p.$ 
		
Let $N/F(G)$ be a minimal normal subgroup of
$G/F(G).$ Since $F(G)=R(G),$  $N/F(G)$ is non-solvable. If $N\neq G,$ then, by the minimality of $G$, $N$ is almost simple, in contradiction with the fact that $F(G)$ is a non-trivial nilpotent normal subgroup of $N$. Thus $G=N$ and $G/F(G)$ is isomorphic to a non-abelian simple group $S.$

	We claim that $p=2.$ Indeed if $p\neq 2,$ then a Sylow 2-subgroup $P$ of $G$ is isomorphic to a Sylow 2-subgroup of $S.$ In particular $P$ contains a subgroup isomorphic to $C_2\times C_2.$ By Lemma \ref{frocomp}, the action of $P$ on $F(G)$ is not fixed-point-free, hence $F(G)P$ is not claw-free since it contains a subgroup isomorphic to
	$C_p\times C_2\times C_2.$
	
Suppose that $F(G)$ contains an element $g$ of order 4. Then
	the subgroup $X$ generated by the elements of order 2 in $Z(F(G))$ has order at most 4 (otherwise $C_F(g)$ would contain a subgroup isomorphic to
	$C_4\times C_2\times C_2))$. If follows from Proposition \ref{nocenter}, that $C_G(X)=F$. But then $G/F$ is isomorphic to a subgroup of $\aut(X),$ which is solvable, a contradiction. So $F$ is an elementary abelian 2-group.
		
Let $P$ be a Sylow 2-subgroup of $G$. Notice that $P$ is non-abelian, otherwise $P\leq C_G(F(G))=F(G),$ in contradiction with the fact that $P/F(G)$ is isomorphic to a Sylow 2-subgroup of a non-abelian simple group. Moreover $P$ cannot be a dihedral group; otherwise $F$ would be a normal subgroup of index at least 4 of a dihedral group, and therefore $F$ would be cyclic and $G/F\leq \aut(F)$ would be abelian. It follows from Proposition \ref{2groups} that $P$ has exponent 4, so we may apply Proposition \ref{quatfree} to deduce that $P=N\langle x\rangle,$ with $N$ an abelian normal subgroup of $P.$ Morevor, since $P/F(G)$ is isomorphic to a Sylow 2-subgroup of a non-abelian simple group, $P/F(G)$ is not cyclic. 

First assume that $N\leq C_4\times C_4.$ This implies $F(G)\leq C_2\times C_2 \times C_2.$ Since $G/F(G)$ is a non-abelian simple group, the unique possibily is that $F(G)\cong C_2\times C_2\times C_2$ and $G/F(G)\cong {\rm{SL}}(3,2).$ In particular $G$ is perfect of order 1344. Using the library of perfect groups available in GAP \cite{GAP}, it can be checked  that there are two possibilities. Either $G\cong \rm{{AGL}}(3,2)$ or $F(G)$ is the Frattini subgroup of $G.$ In the first case in $G$ there are 16 subgroups of order 6 containing the same involution, in the second case $G$ contains an element of order 8. So in both the cases, $P^*(G)$ is not claw-free.

Now assume that $N$ is an elementary abelian 2-group. 
Since $P/F(G)$ is not cyclic, $N$ is not contained in $F(G).$ This implies  $N\not\leq C_P(F(G)).$ 
Let $y\in F(G)\setminus C_P(N).$ Then $\langle y\rangle N$ is non-abelian and therefore there exists $n\in N$ such that $z=yn$ has order 4. Since $P$ contains no subgroup isomorphic to $C_4\times C_2\times C_2,$ we deduce $|C_N(z)|\leq 4.$
Since $F(G)$ is abelian, $F(G)\cap N\leq C_N(z)=C_N(y)\leq C_2\times C_2$. This would imply $F(G)\leq C_2\times C_2 \times C_2$. We can conclude as in the previous paragraph.
\end{proof}


\section{Non-abelian simple groups}\label{simgr}

It is well known that a finite non-abelian simple group contains a four-subgroup, since no simple group has a cyclic group or a quaternion group for its Sylow 2-subgroup. Moreover
if $G$ is a finite non-abelian simple group and $P^*(G)$ is claw-free then no subgroup of $G$ can be isomorphic to $C_2\times C_2\times C_p$ with $p$ an odd prime. This implies that the centralizer of every four-subgroup is a $2$-group. Thus a key role in the study of finite simple groups whose 
reduced-power graph is claw-free is played by the following result:
\begin{theorem}\cite[Theorem 1]{sys} A finite simple group in which the centralizer of every four-subgroup is a $2$-group is isomorphic with one of the following: \begin{enumerate}\item $\psl(2,q),$
	${\rm{Sz}}(q),$ ${\rm{J}}_1,$ ${\rm{M}}_{11},$ $\psl(3,4),$ $^2{\rm{F}}_4(2)^\prime,$ \item $\psl(3,q),$ $\psu(3,q)$ or $\g_2(q)$, for suitable odd $q\geq 3.$
	\end{enumerate}
\end{theorem}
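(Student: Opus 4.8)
The statement is a classification theorem in the spirit of the local analysis of finite simple groups, so the plan is to translate the four-group hypothesis into strong restrictions on the odd-order part of involution centralizers and then feed the resulting $2$-local data into the known characterizations of simple groups by their $2$-structure. First I would record that, as noted just above, every finite non-abelian simple group contains a four-subgroup, so the hypothesis is never vacuous. Fix an involution $t$ and set $K=O(C_G(t))$, the largest normal subgroup of odd order of $C_G(t)$. For any involution $s\in C_G(t)$ with $s\neq t$ the subgroup $V=\langle t,s\rangle$ is a four-group, so $C_G(V)=C_{C_G(t)}(s)$ is a $2$-group by hypothesis; since $C_K(s)\leq C_G(V)$ has odd order, this forces $C_K(s)=1$. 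Hence every non-central involution of $C_G(t)$ acts fixed-point-freely on $K$.

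Next I would exploit the classical coprime fixed-point identity $K=\langle C_K(v)\mid 1\neq v\in V\rangle$, valid for a four-group $V$ acting on a group $K$ of odd order. Applying it to any four-group $V\leq C_G(t)$ with $t\notin V$ (all of whose involutions are then non-central, hence act fixed-point-freely) gives $K=1$. Such a $V$ exists precisely when $C_G(t)$ contains an elementary abelian subgroup of rank at least $3$, so $O(C_G(t))=1$ whenever the $2$-rank of $C_G(t)$ is at least $3$; consequently the generalized Fitting subgroup of such a centralizer is $O_2(C_G(t))$ times a product of components, and the four-group condition tightly constrains those components as well. This is exactly the kind of hypothesis that pushes $G$ either into the range of small $2$-rank or into the range where the Sylow $2$-subgroup is, in effect, a trivial-intersection set.

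I would then feed the constrained $2$-local data into the known structural classifications. When the Sylow $2$-subgroup behaves like a trivial-intersection set one is in the strongly embedded situation, and Bender's theorem yields the even-$q$ families $\psl(2,q)$ and ${\rm{Sz}}(q)$. Otherwise $G$ has small $2$-rank, and one applies the Gorenstein--Walter theorem (dihedral Sylow $2$-subgroups: $\psl(2,q)$ for odd $q$ and $A_7$), the Alperin--Brauer--Gorenstein theorem (semidihedral and wreathed Sylow $2$-subgroups: $\psl(3,q)$ and $\psu(3,q)$ for odd $q$, together with ${\rm{M}}_{11}$), and the Gorenstein--Harada classification of simple groups of sectional $2$-rank at most $4$, combined with the relevant involution-centralizer characterizations, to capture the remaining groups $\psl(3,4)$, $\g_2(q)$, ${\rm{J}}_1$ and ${}^2{\rm{F}}_4(2)'$. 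Finally, for each candidate on these lists I would verify directly, from its known subgroup structure, that the centralizer of every four-subgroup really is a $2$-group, thereby discarding the groups that fail the test (for instance $A_7$, and $\psu(3,q)$ in even characteristic).

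The main obstacle is the endgame rather than the reduction: the fixed-point argument cheaply kills the odd core once the $2$-rank is at least $3$, but it says nothing in the critical case of $2$-rank exactly $2$, where every four-group of $C_G(t)$ necessarily contains $t$ and the core need not vanish. Precisely the odd-characteristic families of $2$-rank $2$ occurring in part (2), as well as the Suzuki and $\psl(2,2^n)$ families (of unbounded $2$-rank but with trivial-intersection Sylow structure), must be separated by the deep $2$-rank-$2$ and strongly-embedded classifications, each a substantial theorem in its own right. The most delicate points are the exact role of the parity of $q$ --- which enters through the shape of the involution centralizers --- and the careful per-family computation of four-group centralizers needed both to confirm the listed groups and to rule out their close relatives.
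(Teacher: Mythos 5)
First, a point of comparison that matters here: the paper does not prove this statement at all. It is imported verbatim from Syskin \cite[Theorem 1]{sys} and used as a black box, so there is no internal proof to measure your attempt against; at the level of this paper, the ``correct'' treatment is simply the citation.

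Evaluated as a proof, your opening reduction is sound: for an involution $t$, setting $K=O(C_G(t))$, any involution $s\in C_G(t)$ with $s\neq t$ spans a four-group with $t$, so $C_K(s)$ is of odd order and contained in a $2$-group, hence trivial; the coprime generation formula $K=\langle C_K(v)\mid 1\neq v\in V\rangle$, applied to a four-group $V\leq C_G(t)$ with $t\notin V$, then gives $K=1$ whenever $C_G(t)$ has $2$-rank at least $3$. The genuine gap is the step you describe as feeding the constrained $2$-local data into the known classifications. The dichotomy you assert --- that the hypothesis forces $G$ either into the strongly embedded (TI Sylow $2$-subgroup) situation or into the range of small sectional $2$-rank --- is not a consequence of anything you have established; it is, in substance, the theorem itself. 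Concretely: (i) Bender's theorem can only be invoked after one \emph{proves} that $G$ has a strongly embedded subgroup, and no argument derives this from the four-group condition; (ii) Gorenstein--Harada requires sectional $2$-rank at most $4$, which is likewise never established, and cannot hold uniformly, since ${\rm{Sz}}(q)$ and $\psl(2,2^n)$, which must survive your analysis, have unbounded $2$-rank and so must be routed through the strongly embedded branch --- but by what criterion is the routing done?; (iii) killing $O(C_G(t))$ does not determine $C_G(t)$: the layer $E(C_G(t))$ can be nontrivial under the hypothesis --- in ${\rm{J}}_1$ the centralizer of an involution is $C_2\times A_5$ --- and this component case needs its own standard-form or signalizer-type analysis, which is absent, as is the final per-family verification that the listed groups satisfy the hypothesis and their close relatives (such as $A_7$ and $\psu(3,2^n)$) do not. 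What you have, then, is a correct first lemma together with an accurate map of the 1970s literature that a proof would have to traverse; that is a reasonable reconstruction of how theorems of this kind were proved, but with its central dichotomy assumed rather than proved it is an outline, not a proof.
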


In order to prove Theorem \ref{simplenoclaw} it suffices to investigate the groups listed in the statement of the previous theorem.
\begin{lemma}
$P^*(J_1)$ is not claw-free.
\end{lemma}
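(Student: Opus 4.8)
The plan is to exhibit an explicit claw inside $P^*(J_1)$, which by definition suffices to show the graph is not claw-free. Since $J_1$ is one of the exceptional groups appearing in the classification theorem just quoted, the strategy is to use its well-documented subgroup and element-order structure rather than any structural argument. First I would recall the conjugacy class / element order data for $J_1$: its order is $175560 = 2^3 \cdot 3 \cdot 5 \cdot 7 \cdot 11 \cdot 19$, and the element orders occurring are $1,2,3,5,6,7,10,11,15,19$. The key observation is that $J_1$ contains elements of order $15$, and more importantly that the involution centralizer has the form $C_{J_1}(t) \cong C_2 \times \alt(5)$ for an involution $t$.

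The cleanest route is to locate the forbidden configuration $C_2 \times C_2 \times C_p$ flagged in the paragraph preceding this lemma, or else to directly produce a claw of second type using a single involution lying in many cyclic subgroups. Concretely, I would take an involution $t$ whose centralizer is $C_2 \times \alt(5)$. Inside $\alt(5)$ there are elements of order $2$, $3$, and $5$, each commuting with $t$; picking three cyclic subgroups $\langle a_1\rangle,\langle a_2\rangle,\langle a_3\rangle$ of distinct prime orders inside this centralizer and applying Remark \ref{centr} with $x=t$ would immediately yield the claw $(a_1t,a_2t,a_3t\to t)$, provided the primes involved avoid the order of $t$ (which they do, since $t$ has order $2$ and one can choose $a_1,a_2$ of orders $3,5$ and a third of order $3$ or $5$ from a second cyclic subgroup). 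Alternatively, since $C_2 \times \alt(5)$ visibly contains a subgroup $C_2\times C_2\times C_3$, the remark quoted just before the lemma that no such subgroup can occur in a claw-free $P^*(G)$ finishes the argument in one line.

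The main obstacle is purely bookkeeping: I must be certain of the precise isomorphism type of the involution centralizer and that the three cyclic subgroups I select are genuinely distinct and have the required prime orders coprime to $|t|$. This is not a conceptual difficulty but requires citing the standard structural facts about $J_1$ (for instance from the Atlas of finite groups) accurately. Once the centralizer structure $C_2 \times \alt(5)$ is in hand, the verification via Remark \ref{centr} is immediate, so I would organize the proof as: state $C_{J_1}(t)\cong C_2\times\alt(5)$ with a citation, extract three commuting subgroups of prime order $\notin\{2\}$, and invoke the remark to name the explicit claw.
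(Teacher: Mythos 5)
Your main argument is correct and in substance it is the paper's own proof: both rest on the subgroup $H \cong C_2 \times \alt(5)$ of $J_1$ (the involution centralizer, cited from the Atlas). The paper dispatches $H$ in one line via Proposition \ref{nocenter} ($H$ is non-solvable with $Z(H)\neq 1$, so $P^*(H)$ contains a claw), while you unwind that proposition and exhibit the claw explicitly through Remark \ref{centr}; since Proposition \ref{nocenter} comes from Proposition \ref{centralizer}, whose proof is exactly this construction, the two routes are the same mechanism. Your final choice of three distinct cyclic subgroups of orders $3$, $5$, $3$ is legitimate: Remark \ref{centr} does not require $p_1,p_2,p_3$ to be distinct primes, only that each differs from $|t|=2$ and that the subgroups $\langle a_i\rangle$ are distinct.

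However, your ``alternative one-line'' route is wrong and should be struck. $C_2 \times \alt(5)$ does \emph{not} contain $C_2\times C_2\times C_3$: in $\alt(5)$ the centralizer of an element $a$ of order $3$ is just $\langle a\rangle\cong C_3$ (and the centralizer of an involution is a Klein four-group), so the centralizer of an order-$3$ element of $C_2\times \alt(5)$ is $C_6$, which has a unique involution; no four-subgroup there commutes with an element of order $3$. This failure is intrinsic to $J_1$, not to your choice of subgroup: $J_1$ appears in Syskin's list (the theorem opening Section \ref{simgr}) precisely because the centralizer of every four-subgroup of $J_1$ is a $2$-group, equivalently $J_1$ has no subgroup isomorphic to $C_2\times C_2\times C_p$ with $p$ odd. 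That is exactly why the paper must treat $J_1$ and the other groups on that list by separate ad hoc arguments; a proof via the forbidden configuration $C_2\times C_2\times C_p$ cannot work for any group on the list.
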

\begin{proof}
It follows from Lemma \ref{nocenter}, noticing that $J_1$ contains a subgroup isomorphic to $C_2\times A_5$ \cite{Conway}.
\end{proof}

\begin{lemma}
	$P^*({\rm{M}}_{11})$ is not claw-free.
\end{lemma}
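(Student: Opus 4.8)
The plan is to show $P^*({\rm{M}}_{11})$ contains a claw by exhibiting a subgroup that forces one. Following the strategy already used for $J_1$ via Lemma \ref{nocenter}, the first thing I would do is locate in ${\rm{M}}_{11}$ either a subgroup of the form $C_2\times C_2\times C_p$ with $p$ odd, or a central product / direct product containing $C_p\times C_2\times C_2$, since any such subgroup immediately yields a claw of the second type by Remark \ref{p2p} or Remark \ref{ppq}. Concretely, the maximal subgroups of ${\rm{M}}_{11}$ are well documented (see \cite{Conway}): they are ${\rm{M}}_{10}\cong A_6.2$, ${\rm{PSL}}(2,11)$, $M_9{:}2\cong 3^2{:}{\rm{Q}}_8.2$, $S_5$, and $2.S_4\cong {\rm{GL}}(2,3)$. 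I would search these for the needed configuration.

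Once a candidate subgroup $H$ is fixed, the key step is to produce three incomparable cyclic subgroups sharing a common element (or a common central involution), so that the directed edges all point the same way. The cleanest route is to find a non-trivial element $x$ of some prime order $p$ whose centralizer $C_G(x)$ contains three distinct cyclic subgroups of a prime order different from $p$; then Remark \ref{centr} directly produces the claw $(a_1x,a_2x,a_3x\to x)$. For ${\rm{M}}_{11}$ the natural choice is to exploit the involution centralizer: the centralizer of a $2$-central involution in ${\rm{M}}_{11}$ is ${\rm{GL}}(2,3)$ of order $48$, which contains an element of order $3$. If $C_G(t)$ for an involution $t$ contains a copy of $C_2\times C_2$ together with an element of order $3$ commuting with it, or three subgroups of order $3$ centralizing $t$, then the claw follows. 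Alternatively, using the $S_5$ subgroup, an element of order $2$ commuting with a $3$-cycle and lying inside a four-group gives the configuration $C_2\times C_2\times C_3$ directly.

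The main obstacle is purely verificatory rather than conceptual: I must confirm that one of these subgroups genuinely contains the required commuting data, i.e. that the relevant centralizer really has the three incomparable cyclic subgroups with the common element, and that the three elements $a_ix$ are pairwise non-adjacent in $P^*(G)$ (so that no extra edge destroys the claw). Since the orders involved are small and ${\rm{M}}_{11}$ has order $7920=2^4\cdot 3^2\cdot 5\cdot 11$, this is a finite, mechanical check that can be settled either by hand from the character-table/centralizer data in \cite{Conway} or directly in GAP \cite{GAP}. I expect the shortest argument to mirror the $J_1$ proof: identify an explicit subgroup isomorphic to $C_2\times C_2\times C_3$ (or invoke $C_2\times C_2\times C_p$ for an odd $p$), cite Remark \ref{ppq}, and conclude that $P^*({\rm{M}}_{11})$ is not claw-free.
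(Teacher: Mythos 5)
Your proposal has a genuine gap: the route you ultimately commit to cannot work for ${\rm M}_{11}$. You propose to find a subgroup $C_2\times C_2\times C_p$ with $p$ odd and invoke Remark \ref{ppq}, mirroring the $J_1$ argument, but ${\rm M}_{11}$ contains no such subgroup. Indeed, ${\rm M}_{11}$ appears in Syskin's theorem \cite{sys}, quoted at the start of Section \ref{simgr}, precisely because the centralizer of every four-subgroup of ${\rm M}_{11}$ is a $2$-group; equivalently, no element of odd prime order commutes with a Klein four-group (the centralizer of an element of order $3$ has order $18$, with Sylow $2$-subgroup of order $2$, and elements of order $5$ and $11$ are self-centralizing). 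Your concrete claims fail for the same reason: in $S_5$ the centralizer of a $3$-cycle is cyclic of order $6$, so $S_5$ contains no $C_2\times C_2\times C_3$; and in $C_{{\rm M}_{11}}(t)\cong {\rm GL}(2,3)$ the centralizer of an element of order $3$ is again cyclic of order $6$, so no element of order $3$ there commutes with a four-group. Since your closing sentence bets the proof on exhibiting a $C_2\times C_2\times C_3$, the argument as written collapses.

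The one branch of your proposal that does succeed is the one you mention only in passing and never verify: ${\rm GL}(2,3)$ has four Sylow $3$-subgroups, so the centralizer of an involution $t$ of ${\rm M}_{11}$ contains three distinct subgroups $\langle a_1\rangle,\langle a_2\rangle,\langle a_3\rangle$ of order $3$, and Remark \ref{centr} then gives the claw $(a_1t,a_2t,a_3t\to t)$ with pendants of order $6$. The paper's own proof is the order-$4$ analogue of this: the quaternion subgroup $Q_8$ of ${\rm GL}(2,3)$ supplies three distinct cyclic subgroups of order $4$, all containing $t$, so $(a_1,a_2,a_3\to t)$ is a claw of the second type; in the paper's phrasing, every involution of ${\rm M}_{11}$ lies in three different cyclic subgroups of order $4$. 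Either verification is short, but one of them must actually be carried out; to repair your proof, replace the $C_2\times C_2\times C_p$ search by the Sylow $3$-count in ${\rm GL}(2,3)$ and cite Remark \ref{centr}.
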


\begin{proof}
	It suffice to notice that any involution of ${\rm{M}}_{11}$ is contained in three different cyclic subgroups of order 4.
\end{proof}

\begin{lemma}
	Let $G={\rm{Sz}}(q)$, where $q=2^{2k+1}$, where $k\geq 1$ is a positive integer. Then  $P^*(G)$ is never claw-free.
\end{lemma}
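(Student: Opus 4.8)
The plan is to exhibit, for every Suzuki group $\mathrm{Sz}(q)$ with $q=2^{2k+1}$ and $k\geq 1$, a concrete claw in $P^*(G)$. The natural strategy is to exploit the well-known subgroup structure of $\mathrm{Sz}(q)$: its maximal subgroups are the Borel subgroup $B=Q\rtimes C$ (where $Q$ is a Sylow $2$-subgroup of order $q^2$ and $C$ is cyclic of order $q-1$), the normalizers of the three cyclic tori of orders $q-1$, $q+2r+1$ and $q-2r+1$ (with $r=2^k$), and copies of $\mathrm{Sz}(q_0)$ for subfields. The key numerical fact is that $q^2+1=(q+2r+1)(q-2r+1)$, so each of the two factors exceeds $1$ and $q\pm 2r+1\geq 5$ whenever $k\geq 1$; hence $G$ contains a cyclic subgroup whose order is not a prime power.

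The cleanest approach will be to locate an element $g$ whose cyclic subgroup $\langle g\rangle$ has non-prime-power order and produces a claw of the first type, since by the observation following the two claw types in Section~\ref{prelim}, any element whose order has at least two distinct prime divisors gives rise to such a claw once $\langle g\rangle$ has three pairwise incomparable subgroups. More precisely, I would first check whether one of $q+2r+1$ or $q-2r+1$ fails to be a prime power: if so, that cyclic torus contains an element $g$ with $\pi(\langle g\rangle)\geq 2$, and the subgroups $\langle g^{p_1}\rangle,\langle g^{p_2}\rangle,\langle g^{p_1p_2}\rangle$ (for distinct prime divisors $p_1,p_2$ of $|g|$) are incomparable, yielding the claw $(g\to g^{p_1},g^{p_2},g^{p_1p_2})$ exactly as in Remark~\ref{manyprimes}. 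The fallback, when both torus orders happen to be prime powers, is to use a claw of the second type built on an involution: every involution in $\mathrm{Sz}(q)$ is centralized by a full Sylow $2$-subgroup $Q$ of order $q^2\geq 8$, and one shows $Q$ contains three distinct cyclic subgroups of order $4$ through the given involution, producing a claw analogous to the $\mathrm{M}_{11}$ argument.

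The main obstacle is organizing the case analysis so that at least one construction always succeeds: I must verify that the first-type construction covers all but finitely many $q$, and handle any exceptional small $q$ (such as $q=8$) by the second-type involution argument or by direct inspection of the Sylow $2$-structure. The cleanest unifying observation is that the Sylow $2$-subgroup of $\mathrm{Sz}(q)$ is non-abelian of exponent $4$ and order $q^2$, so it is never cyclic and, by the analysis of $2$-groups in Propositions~\ref{2groups} and~\ref{quatfree}, cannot have claw-free reduced power graph once $q\geq 8$; indeed $Q$ contains a subgroup isomorphic to $C_2\times C_2\times C_2$ together with elements of order $4$, which forces a second-type claw. This second-type argument is actually uniform in $k$ and sidesteps the need to factor $q\pm 2r+1$ entirely, so I expect the final proof to rest primarily on the structure of the Sylow $2$-subgroup rather than on the torus orders, with the torus computation serving only as an alternative illustration.
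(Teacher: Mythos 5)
Your overall target---a claw of the second type inside a Sylow $2$-subgroup---is the same as the paper's, but neither of the two arguments you offer actually establishes it. First, your opening strategy rests on a false criterion: an element $g$ whose order has exactly two distinct prime divisors does \emph{not} produce a claw of the first type. If $|g|=p_1p_2$ then $g^{p_1p_2}=1$, so your proposed claw $(g\to g^{p_1},g^{p_2},g^{p_1p_2})$ degenerates; indeed the paper's own remark shows that a cyclic group of order $p^nq$ has claw-free reduced power graph, and Remark~\ref{manyprimes} requires three distinct primes, or order $p_1^2p_2^2$. This is not a repairable slip: for ${\rm Sz}(8)$ the relevant cyclic subgroup orders are $7$, $13$ and $5$, all prime, and for ${\rm Sz}(32)$ they are $31$, $41$ and $25$, so in these groups every element has prime-power order and no claw of the first type exists anywhere. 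Hence your ``fallback'' is in fact the whole proof.

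Second, the fallback is precisely the step you leave unproved: ``one shows $Q$ contains three distinct cyclic subgroups of order $4$ through the given involution'' is the entire content of the lemma, and the ``unifying observation'' you substitute for it does not work. Proposition~\ref{2groups} permits non-cyclic $2$-groups of exponent $4$, so it yields no contradiction; Proposition~\ref{quatfree} would only help if you verified that a Suzuki $2$-group is not of the form $N\langle x\rangle$ with $N$ abelian normal, which you never do; and the claim that containing $C_2\times C_2\times C_2$ together with elements of order $4$ forces a claw is simply false---$D_8\times C_2$ contains both, yet its reduced power graph is claw-free (the only candidate centre is the central involution, whose four neighbours of order $4$ split into two adjacent pairs, so no three of them are independent). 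What you actually need is either a subgroup isomorphic to $C_4\times C_2\times C_2$, to which Remark~\ref{p2p} applies---and Suzuki $2$-groups do contain one, since the centralizer of an element of order $4$ is abelian of order $2q$ and exponent $4$, containing the elementary abelian centre of order $q\geq 8$---or the paper's counting argument: by \cite{suzuki2} a Sylow $2$-subgroup has exactly $q-1$ involutions and $q^2-q$ elements of order $4$, hence $(q^2-q)/2$ cyclic subgroups of order $4$, so by pigeonhole some involution lies in at least $q/2\geq 4$ of them, and any three such subgroups give a claw of the second type. Either of these short arguments would close the gap; as written, your proposal is missing its central step.
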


\begin{proof}
	According to \cite{suzuki2}, a Sylow $2$-subgroup of $G$ has order $q^{2}$; it contains $q-1$ elements of order $2$, and $q^{2}-q$ elements of order $4$. So, at least $\dfrac{q}{2}$ subgroups of order $4$ share a common subgroup of order $2$. Since $q/2\geq 3,$ this implies that the reduced power graph of a Sylow $2$-subgroup of $G$ always contains a claw.
\end{proof}

\begin{lemma}
	$P^*(\psl(3,4))$ is not claw-free.
\end{lemma}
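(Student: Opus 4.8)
The goal is to show that $P^*(\psl(3,4))$ is not claw-free. My plan is to exhibit a concrete claw inside the reduced power graph, following the same strategy used in the preceding lemmas: either locate a subgroup of the form $C_2\times C_2\times C_p$ (which by Remark \ref{ppq} forces a claw of the second type) or find a single involution lying inside three pairwise-incomparable cyclic subgroups (which produces a claw of the second type with that involution as the center), or alternatively find an element whose order has the shape ruled out by Remark \ref{manyprimes}.

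First I would recall the relevant arithmetic and subgroup structure of $\psl(3,4)$. Its order is $|\psl(3,4)| = 20160 = 2^6\cdot 3^2\cdot 5\cdot 7$, so $\pi(\psl(3,4)) = \{2,3,5,7\}$. The most efficient route is to examine the centralizer of an involution: the Sylow $2$-subgroup has order $2^6$, and the involution centralizers in $\psl(3,4)$ are well documented. If I can show that the centralizer $C_G(t)$ of a suitable involution $t$ contains a subgroup isomorphic to $C_2\times C_2$ together with a commuting element of odd prime order, then $C_G(t)$ contains $C_2\times C_2\times C_p$ and Remark \ref{ppq} immediately yields a claw. Equivalently, since by the remark opening Section \ref{simgr} a claw-free $P^*(G)$ forbids any subgroup isomorphic to $C_2\times C_2\times C_p$ with $p$ odd, it suffices to find a four-subgroup whose centralizer is \emph{not} a $2$-group; this is the cleanest formulation, and it would also explain why $\psl(3,4)$ appears in the list of the cited Theorem but nonetheless fails the claw-free condition.

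The key computational step, then, is to pin down the structure of a four-subgroup centralizer in $\psl(3,4)$. I would argue as follows: $\psl(3,4)$ acts on the projective plane over $\FF_4$, and a transvection-type involution has a large centralizer; more precisely the centralizer of an involution in $\psl(3,4)$ is known to have order divisible by $3$ (indeed it has a section involving $\psl(2,4)\cong A_5$ or a group of order containing the factor $3$). Concretely, one checks from the character table or the ATLAS \cite{Conway} data that there is a four-subgroup $V\cong C_2\times C_2$ with $|C_G(V)|$ divisible by $3$, so that $V\langle s\rangle \cong C_2\times C_2\times C_3$ for an element $s$ of order $3$ commuting with $V$. Then $(vs_1, v s_2, v s_3 \to s)$-type reasoning, or directly Remark \ref{ppq}, produces the claw. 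Since this is a single specific group, invoking the ATLAS or a short GAP computation \cite{GAP} to verify the four-subgroup centralizer is entirely legitimate and matches the style of the surrounding proofs.

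The main obstacle is identifying the correct involution (or four-subgroup) class and certifying that its centralizer has even just one commuting element of odd order, since $\psl(3,4)$ has several conjugacy classes of involutions and four-subgroups, and not every four-subgroup centralizer is a $2$-group. Once the right four-subgroup is fixed this is routine, but choosing it and justifying the centralizer structure without an enormous matrix calculation is the delicate point; I would lean on the known maximal-subgroup list of $\psl(3,4)$ (which contains copies of $A_6$ and of $\psl(2,7)$, both of even order divisible by $3$) to locate commuting elements of orders $2$ and $3$, thereby guaranteeing the forbidden $C_2\times C_2\times C_3$ and completing the proof.
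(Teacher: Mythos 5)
Your strategy cannot work, because the subgroup you are hunting for does not exist: $\psl(3,4)$ contains no subgroup isomorphic to $C_2\times C_2\times C_p$ with $p$ odd, and no four-subgroup whose centralizer fails to be a $2$-group. You have inverted the logic of Section \ref{simgr}. The theorem of Syskin \cite{sys} quoted there lists exactly the simple groups in which the centralizer of \emph{every} four-subgroup \emph{is} a $2$-group; $\psl(3,4)$ appears in that list precisely because it \emph{passes} this test, which is why it cannot be eliminated by the $C_2\times C_2\times C_p$ criterion and a separate ad hoc argument is needed. Your supporting claims are also factually wrong: the centralizer of an involution in $\psl(3,4)$ is a Sylow $2$-subgroup of order $64$ (in particular its order is not divisible by $3$), and the element orders occurring in $\psl(3,4)$ are $1,2,3,4,5,7$, so there is not even an element of order $6$, let alone a commuting pair of elements of orders $2$ and $3$. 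The maximal subgroups $A_6$ and $\psl(2,7)$ that you invoke do not help for the same reason: neither of them contains an element of order $6$, so neither contains commuting elements of orders $2$ and $3$.

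Since no involution of $\psl(3,4)$ commutes with any element of odd order, any claw must be located inside a Sylow $2$-subgroup, and this is what the paper does: one checks that every involution of $\psl(3,4)$ is the square of precisely $12$ elements of order $4$, hence lies in six pairwise incomparable cyclic subgroups of order $4$. Taking generators $a_1,a_2,a_3$ of three of these subgroups yields a claw of the second type $(a_1,a_2,a_3\to b)$ with $b=a_1^2=a_2^2=a_3^2$, exactly as in the arguments for ${\rm M}_{11}$ and ${\rm Sz}(q)$. If you want to salvage your write-up, replace the centralizer analysis by this counting of square roots of an involution, which can be verified from the ATLAS \cite{Conway} or by a short GAP computation \cite{GAP}.
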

\begin{proof}
It suffices to prove that in $\psl(3,4)$ every involution is the square of precisely 12 elements of order 4.
\end{proof}

\begin{lemma}\label{psl3}
If $q$ is odd, then $P^*(\psl(3,q))$ is not claw-free.
\end{lemma}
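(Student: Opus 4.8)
The plan is to exhibit, inside $\psl(3,q)$ for odd $q$, a single involution that lies in at least three pairwise-incomparable cyclic subgroups, or equivalently (since here the relevant central vertex is the involution itself, giving a claw of the second type) three distinct cyclic overgroups sharing that involution. The natural source of such overgroups is the structure of the centralizer of an involution. In $\psl(3,q)$ with $q$ odd, the centralizer of a (diagonal-type) involution is isomorphic to $\gl(2,q)/Z$ glued appropriately, and in particular contains a subgroup of the form $\gl(2,q)$ modulo scalars; this centralizer is non-solvable and contains many cyclic subgroups whose common intersection is the central involution. So the first step is to pin down the isomorphism type of $C_G(t)$ for an involution $t$.

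First I would recall that every involution in $\psl(3,q)$, $q$ odd, is conjugate to the image of $\mathrm{diag}(-1,-1,1)$, and compute its centralizer $C=C_G(t)$. One finds $C \cong \gl(2,q)$ modulo the relevant scalar identification, a group of even order divisible by the odd prime divisors of $q^2-1$. The key structural point is that $C$ has a central involution (namely $t$) through which pass numerous cyclic subgroups coming from the torus and from elements of order $4$ or of mixed order inside $\gl(2,q)$. The second step is to extract from $C$ three distinct cyclic subgroups $\langle a_1\rangle,\langle a_2\rangle,\langle a_3\rangle$ with $\langle a_i\rangle \cap \langle a_j\rangle = \langle t\rangle$ for $i\neq j$, which then forces a claw of the second type with central vertex $t$ and pendants $a_1,a_2,a_3$.

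Concretely, I expect the cleanest route is to locate a subgroup isomorphic to $C_2\times C_2\times C_r$ or to $\sss L(2,q)$-type configuration inside $C$: since $q$ is odd, $\gl(2,q)$ contains elements of order $4$ (e.g. $\mathrm{diag}(i,-i)$ when $4\mid q-1$, or suitable companion matrices otherwise) and elements of larger order whose square is $t$, yielding several cyclic subgroups of order $4$ or $2m$ all containing $t$. One then checks there are at least three such subgroups that pairwise intersect exactly in $\langle t\rangle$. Alternatively, and perhaps more uniformly in $q$, one notes that $C$ contains $C_2\times \psl(2,q)$ or $C_2\times \sss L(2,q)$ as a section; the $\psl(2,q)$ (or $\sss L(2,q)$) factor supplies a four-group, and combined with $t$ this produces a $C_2\times C_2\times C_2$, hence in particular three distinct cyclic subgroups of order $2$ meeting in the identity — but for a claw with pendants of order $>2$ I would instead use three order-$4$ cyclic subgroups over $t$, as in the $\psl(3,4)$ and $\mathrm{M}_{11}$ lemmas above.

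The main obstacle is handling the dependence on $q \bmod 4$: whether $t$ is a square in $C$, and how many elements of order $4$ square to $t$, varies with the congruence class of $q$, so the counting of cyclic overgroups must be done in (at least) the two cases $q\equiv 1$ and $q\equiv 3 \pmod 4$. I expect this to be a finite, essentially routine linear-algebra computation in $\gl(2,q)$ — counting square roots of $-1$ and of $\mathrm{diag}(-1,-1)$ — but it is where the real work lies. Once it is verified that in every odd-$q$ case the involution $t$ has at least three cyclic overgroups pairwise meeting in $\langle t\rangle$, Remark~\ref{centr} (or a direct check that $(a_1,a_2,a_3\to t)$ is an induced claw) completes the argument and shows $P^*(\psl(3,q))$ is not claw-free.
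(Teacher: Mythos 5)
Your overall strategy --- fix an involution $t$ in $\psl(3,q)$, pass to its centralizer, and exhibit three distinct cyclic subgroups of order $4$ pairwise intersecting in $\langle t\rangle$, which yields a claw of the second type $(a_1,a_2,a_3\to t)$ --- is sound, and it is genuinely different from the paper's argument. But as written the proposal has a real gap: the decisive step, namely the verification that for \emph{every} odd $q$ there exist at least three cyclic order-$4$ overgroups of $t$, is never carried out. You explicitly defer it (``it is where the real work lies''), so what you have is a plan, not a proof. A secondary slip: your parenthetical alternative via a subgroup $C_2\times C_2\times C_2$ is a dead end, as you half-recognize --- $P^*(C_2\times C_2\times C_2)$ has no edges at all, hence is claw-free; an elementary abelian $2$-group only produces a claw when paired with a commuting element of odd order, as in Remark \ref{ppq}.

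The missing count is in fact routine, and no case split on $q\bmod 4$ is needed if you set it up inside a copy of ${\rm{SL}}(2,q)$: the block embedding $A\mapsto \mathrm{diag}(A,1)$ of ${\rm{SL}}(2,q)$ into ${\rm{SL}}(3,q)$ meets the centre of ${\rm{SL}}(3,q)$ (which has odd order $(3,q-1)$) trivially, so ${\rm{SL}}(2,q)$ embeds in $\psl(3,q)$, and its central involution serves as $t$. Any $A\in {\rm{SL}}(2,q)$ with $A^2=-I$ has characteristic polynomial $x^2+1$ (a scalar square root of $-I$ would have determinant $-1$), and these elements form a single conjugacy class of size $q(q-1)$ or $q(q+1)$ according as $x^2+1$ is irreducible or split over $\FF_q$; in either case there are at least $6$ of them, and each cyclic subgroup of order $4$ containing $t$ accounts for exactly two, so there are at least three such subgroups, pairwise meeting in $\langle t\rangle$ and with pairwise non-adjacent generators. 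That closes your gap. For comparison, the paper takes the same subgroup $H\cong {\rm{SL}}(2,q)$ but avoids all counting when $q\neq 3$: it simply notes that $H$ is non-solvable with $|Z(H)|=2$ and invokes Proposition \ref{nocenter}, reserving a direct check for $\psl(3,3)$ --- which is exactly the $q=3$ instance of your counting, the three cyclic subgroups of order $4$ in $Q_8\leq {\rm{SL}}(2,3)$. So your route, once completed, is more explicit and uniform in $q$, while the paper's is shorter because it reuses machinery it has already established.
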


\begin{proof}$ \psl(3,q)$ contains a subgroup $H\cong {\rm{SL}}(2,q).$ If $q\neq 3$, then $H$ is not solvable and $Z(H)$ has order 2. But then $P^*(H)$ contains a claw by Proposition \ref{nocenter}. It is not difficult to check that
	$\psl(3,3)$ contains 3 different cyclic subgroups of order 4 containing the same subgroup of order 2.
\end{proof}

\begin{lemma}
If $q$ is odd, then 	$P^*(\g_{2}(q))$  is not claw-free.
\end{lemma}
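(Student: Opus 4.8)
The plan is to imitate the proof of Lemma \ref{psl3}: exhibit inside $\g_2(q)$ a subgroup $H$ whose reduced power graph is already known to contain a claw, and then use that claw-freeness of $P^*(\g_2(q))$ would force $P^*(H)$ to be claw-free as well. The natural candidate is a copy of ${\rm{SL}}(2,q)$, since for odd $q$ this group has a unique involution $-I$ and hence generalized quaternion Sylow $2$-subgroups of order at least $8$: the $2$-part of $|{\rm{SL}}(2,q)|=q(q^2-1)$ is divisible by $8$ because one of $q\pm 1$ is divisible by $4$, and a non-cyclic $2$-group with a single involution is generalized quaternion. By the arguments recorded in the proofs of Propositions \ref{th_claw_nil} and \ref{2groups}, the reduced power graph of a generalized quaternion group always contains a claw, so $P^*(H)$ — and therefore $P^*(\g_2(q))$ — is not claw-free.

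First I would locate the subgroup $H\cong{\rm{SL}}(2,q)$ inside $\g_2(q)$. The most reliable source is the long-root $A_2$ subsystem subgroup: $\g_2(q)$ contains ${\rm{SL}}(3,q)$, and the standard block embedding ${\rm{SL}}(2,q)\hookrightarrow{\rm{SL}}(3,q)$, $A\mapsto \mathrm{diag}(A,1)$, sends $-I_2$ to a non-trivial involution, so this copy of ${\rm{SL}}(2,q)$ has centre of order $2$. Equivalently, one may use that for $q$ odd the centralizer in $\g_2(q)$ of an involution has the shape $({\rm{SL}}(2,q)\circ{\rm{SL}}(2,q)).2$, which again exhibits genuine ${\rm{SL}}(2,q)$ factors with centre of order $2$. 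Either description supplies the subgroup $H$ needed above, and the generalized-quaternion observation then disposes of all odd $q$ uniformly.

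If one prefers to mirror the casework in Lemma \ref{psl3}, the argument can instead be split on $q$: for $q\geq 5$ the subgroup $H\cong{\rm{SL}}(2,q)$ is non-solvable with $Z(H)$ of order $2$ and $\pi(H)\geq 3$, so Proposition \ref{nocenter} produces a claw at once; for $q=3$ the group ${\rm{SL}}(2,3)$ is solvable, but its Sylow $2$-subgroup $Q_8$ still yields a claw, its three cyclic subgroups of order $4$ sharing the unique involution $z$ and giving the second-type claw $(a_1,a_2,a_3\to z)$. Alternatively, for $q=3$ one may simply note that $\g_2(3)$ contains $\psl(3,3)$ (the $A_2$ subsystem subgroup, which is simple here since $\gcd(3,q-1)=1$) and invoke Lemma \ref{psl3}.

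The only genuine obstacle is the first step: confirming that the copy of ${\rm{SL}}(2,q)$ sitting inside $\g_2(q)$ is the full ${\rm{SL}}(2,q)$ with centre of order $2$, rather than its quotient $\psl(2,q)$. This distinction is precisely what guarantees quaternion (rather than dihedral or Klein four) Sylow $2$-subgroups, and hence the claw. It is a standard fact about the subgroup structure of $\g_2(q)$, available either from the subsystem subgroup ${\rm{SL}}(3,q)$ or from the involution-centralizer description; once it is cited, the rest of the argument is immediate.
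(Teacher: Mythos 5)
Your proposal is correct, but its main line of argument differs from the paper's. The paper stays one level up: citing Kleidman it takes $H\cong {\rm{SL}}(3,q)\leq \g_{2}(q)$ and splits on the centre --- if $Z(H)=1$ then $H\cong \psl(3,q)$ and Lemma \ref{psl3} applies, while if $Z(H)\neq 1$ then $H$ is a non-solvable group with non-trivial centre and Proposition \ref{nocenter} yields a claw immediately. The descent to ${\rm{SL}}(2,q)$ that you perform explicitly is, in the paper, hidden inside the proof of Lemma \ref{psl3}, and your second variant (Proposition \ref{nocenter} for $q\geq 5$, separate treatment of $q=3$) essentially inlines that proof. Your primary variant is the genuinely different ingredient: instead of any case split, you use that ${\rm{SL}}(2,q)$, $q$ odd, has generalized quaternion Sylow $2$-subgroups of order at least $8$, and the reduced power graph of a generalized quaternion group contains a claw (a fact the paper asserts in the proof of Proposition \ref{th_claw_nil}; note that Proposition \ref{2groups} by itself only covers order at least $16$, but for $Q_8$ the three cyclic subgroups of order $4$ sharing the involution give a second-type claw). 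This buys uniformity: all odd $q$, including $q=3$, are handled at once, with no appeal to Proposition \ref{nocenter} or to Lemma \ref{psl3}. Two points worth tightening: the ``genuine obstacle'' you flag is actually harmless even if one only knows $\psl(3,q)\leq \g_{2}(q)$, because the block ${\rm{SL}}(2,q)$ meets $Z({\rm{SL}}(3,q))$ (whose order divides $3$) trivially and therefore embeds in the quotient as well; and to invoke the Brauer--Suzuki/Burnside characterization you should also record that the Sylow $2$-subgroup is non-cyclic, which is standard (the preimage in ${\rm{SL}}(2,q)$ of a Klein four-subgroup of $\psl(2,q)$ is already isomorphic to $Q_8$).
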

\begin{proof}
When $q$ is odd,	$\g_{2}(q)$ contains  a subgroup $H$ isomorphic to ${\rm{SL}}(3, q)$ \cite{kleidman2}. If $Z(H)=1,$ then
	$H\cong \psl(3,q)$ and we conclude from the previous lemma that $P^*(H)$ contains a claw. It $Z(H)\neq 1,$ then the same conclusion is ensured by Proposition \ref{nocenter}.
\end{proof}

\begin{lemma}
	$P^*(^2{\rm{F}}_4(2)^\prime)$ is not claw-free.
\end{lemma}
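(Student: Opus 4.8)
The plan is to exhibit a claw already inside a Sylow $2$-subgroup $P$ of $G={}^2{\rm{F}}_4(2)^\prime$, invoking Proposition \ref{2groups}. Since $P^*(P)$ is the subgraph of $P^*(G)$ induced by the nontrivial elements of $P$, any claw in $P^*(P)$ is a claw in $P^*(G)$, so it is enough to prove that $P^*(P)$ is not claw-free.

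First I would record the relevant data for the Tits group from the \textsc{Atlas} \cite{Conway}: the order of $G$ is $2^{11}\cdot 3^3\cdot 5^2\cdot 13$, so $P$ is a $2$-group of order $2^{11}$; moreover $G$ contains elements of order $8$ (indeed of order $16$), while no element of $G$ has order exceeding $16$. In particular $P$ is non-cyclic, since its order $2^{11}$ exceeds the maximal order of any of its elements, and $\exp(P)\ge 8>4$. Now suppose, for contradiction, that $P^*(P)$ is claw-free. Since $\exp(P)>4$, Proposition \ref{2groups} forces $P$ to be a dihedral group. But a dihedral $2$-group of order $2^{11}$ has a cyclic maximal subgroup of order $2^{10}$, hence contains an element of order $2^{10}=1024$, contradicting the fact that no element of $G$ has order greater than $16$. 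Therefore $P^*(P)$ contains a claw, and so does $P^*(G)$.

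The only external inputs are that $|P|=2^{11}$ and that the maximal element order in $G$ is $16$, both standard facts about $G$; the group-theoretic deduction via Proposition \ref{2groups} is then immediate. The main (and only minor) obstacle is ensuring that the element-order spectrum genuinely excludes a dihedral Sylow $2$-subgroup, which it does with room to spare since $16\ll 2^{10}$. I expect no serious computation to be required beyond citing the spectrum.
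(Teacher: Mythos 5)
Your proof is correct, but it takes a genuinely different route from the paper's. The paper argues ``from above'': it cites the ATLAS fact that ${}^2{\rm{F}}_4(2)^\prime$ contains a subgroup $H\cong\psl(3,3)$ and then invokes Lemma \ref{psl3}, which already produced a claw in $P^*(\psl(3,3))$ (three distinct cyclic subgroups of order $4$ sharing a common involution); since the power graph of a subgroup is an induced subgraph of the power graph of the whole group, that claw transfers to $G$. You instead work ``from below'', inside a Sylow $2$-subgroup $P$: using only the order $2^{11}$ of $P$ and the element spectrum of the Tits group (elements of order $8$ and $16$ exist, and $16$ is the maximal element order), you rule out both alternatives of Proposition \ref{2groups} --- exponent at most $4$ fails because $\exp(P)\geq 8$, and dihedral fails because $D_{2^{11}}$ would contain an element of order $2^{10}$, far beyond the spectrum. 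Your approach buys independence from knowledge of the subgroup structure of the Tits group (the order and the spectrum are lighter ATLAS data than the list of maximal subgroups), and it reuses the paper's own machinery rather than the chain of lemmas about $\psl(3,q)$; the cost is that it is non-constructive --- it proves a claw exists in $P^*(P)$ by contradiction without exhibiting one --- whereas the paper's route points to an explicit claw of the second type. Both proofs are complete, and your reduction step (a claw in $P^*(P)$ is an induced claw in $P^*(G)$, because adjacency in the power graph is intrinsic to the pair of elements) is stated and used correctly.
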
 
\begin{proof}
It suffices to notice that $^2{\rm{F}}_4(2)^\prime$ contains a subgroup $H\cong \psl(3,3)$ \cite{Conway} and that $P^*(H)$ is claw-free by Lemma 	\ref{psl3}.
\end{proof}

\begin{lemma}
	If $q$ is odd, then $P^*(\psu(3, q))$ is not claw-free.
\end{lemma}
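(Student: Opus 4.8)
The plan is to handle the family $\psu(3,q)$ with $q$ odd in the same spirit as the earlier lemmas on $\psl(3,q)$ and $\g_2(q)$: exhibit an offending configuration that forces a claw and then invoke \emph{Proposition \ref{nocenter}}. Concretely, I would first recall the subgroup structure of $\psu(3,q)$. The group $\psu(3,q) = \mathrm{SU}(3,q)/Z$ contains a subgroup isomorphic to $\mathrm{SU}(2,q) \cong \mathrm{SL}(2,q)$ (the stabilizer of a nondegenerate point acts as a unitary/linear group on the orthogonal complement), and this gives rise to a subgroup $H$ of $\psu(3,q)$ which is either $\psl(2,q)$ or $\mathrm{SL}(2,q)$ depending on the gcd conditions. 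Since $q$ is odd and $q\geq 3$, this $H$ is non-solvable with $\pi(H)\geq 3$.

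The main case split mirrors \emph{Lemma \ref{psl3}}. When $Z(H)\neq 1$, so $H\cong \mathrm{SL}(2,q)$ has a central involution, I would apply \emph{Proposition \ref{nocenter}}: since $H$ is not solvable we have $\pi(H)\geq 3$, forcing $Z(H)=1$ if $P^*(H)$ were claw-free, a contradiction. When instead $Z(H)=1$ and $H\cong \psl(2,q)$, I cannot immediately conclude a claw because $\psl(2,q)$ itself may have claw-free reduced power graph (that is precisely the content of \emph{Theorem \ref{simplenoclaw}}). So in this subcase I would look at a different subgroup or a different local configuration: the centralizer of an involution in $\psu(3,q)$ has the form (a central extension of) $\mathrm{GU}(2,q)$ or $\psl(2,q)\times C_{(3,q+1)}$-type structure, which for suitable $q$ contains a subgroup isomorphic to $C_2\times C_2\times C_r$ with $r$ an odd prime, or else contains several cyclic subgroups of order $4$ sharing a common involution. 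Either configuration yields a claw directly (the first via the $C_2\times C_2\times C_r$ forbidden subgroup noted at the start of the section, the second by the same counting used for $\mathrm{Sz}(q)$ and $\mathrm{M}_{11}$).

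The cleanest route, which I would try first, is to locate a single subgroup whose reduced power graph is already known to contain a claw. Since $\psu(3,q)$ contains the maximal subgroup $(C_{(q^2-1)/d} \rtimes C_2)$ coming from a maximal torus normalizer, and for odd $q$ the relevant torus of order $(q^2-1)/d$ together with the inverting involution produces a dihedral-type section, I would instead aim to find inside $\psu(3,q)$ an element of order divisible by three distinct primes or an element whose order is $p_1^2 p_2^2$, so that \emph{Remark \ref{manyprimes}} applies and produces a claw of the first type with no further work. For $q$ odd the torus orders $(q^2-1)/d$ and $(q^2-q+1)/d$ are typically divisible by enough primes to guarantee this; the one genuine obstacle is to rule out, uniformly in $q$, the small exceptional values where the torus orders are prime-powers or products of only two primes.

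The step I expect to be the main obstacle is exactly this uniformity: verifying that for \emph{every} odd $q\geq 3$ at least one of the forbidden configurations actually occurs, rather than only for generic $q$. I would therefore organize the argument so that the generic case is dispatched by \emph{Remark \ref{manyprimes}} or by the $\mathrm{SL}(2,q)$ subgroup and \emph{Proposition \ref{nocenter}}, and then treat the finitely many small parameters (for instance $q=3,5,7,9$) by direct inspection, checking in each that either an involution is a square of at least three elements of order $4$, or that a $C_2\times C_2\times C_r$ subgroup is present. Handling $q=3$, where $\psu(3,3)$ is relatively small, separately by explicit computation as in the $\psl(3,3)$ case should close the remaining gap.
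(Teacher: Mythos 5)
Your main route coincides with the paper's: embed $H\cong\mathrm{SL}(2,q)$ in $\psu(3,q)$ and use Proposition \ref{nocenter} against its central involution. But there is a genuine gap exactly where you hedge on whether $H$ is $\mathrm{SL}(2,q)$ or $\psl(2,q)$ ``depending on the gcd conditions'': the branch $H\cong\psl(2,q)$ is left as a sketch full of ``for suitable $q$'' and ``typically'' qualifiers, and it cannot be waved away, since $\psl(2,q)$ itself may well have a claw-free reduced power graph. The point you are missing is that this branch never occurs. The center of $\mathrm{SU}(3,q)$ consists of the scalars $\lambda I$ with $\lambda^{3}=\lambda^{q+1}=1$, so it has odd order $(3,q+1)$; the embedded $\mathrm{SU}(2,q)\cong\mathrm{SL}(2,q)$ (stabilizer of a nondegenerate point) meets it in a central subgroup of $\mathrm{SL}(2,q)$ of odd order, hence trivially, because $Z(\mathrm{SL}(2,q))$ has order $2$ for $q$ odd. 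Therefore the image of $\mathrm{SU}(2,q)$ in $\psu(3,q)$ is always a genuine copy of $\mathrm{SL}(2,q)$, central involution included, the case split disappears, and Proposition \ref{nocenter} applies whenever $H$ is non-solvable. This is precisely what the paper does, in parallel with Lemma \ref{psl3}.

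A second flaw: your claim that ``since $q$ is odd and $q\geq 3$, this $H$ is non-solvable with $\pi(H)\geq 3$'' is false at $q=3$, where $\mathrm{SL}(2,3)$ is solvable and $\pi(\mathrm{SL}(2,3))=2$, so Proposition \ref{nocenter} gives nothing. You do flag $q=3$ for separate explicit treatment at the end, which matches the paper (every involution of $\psu(3,3)$ lies in four distinct cyclic subgroups of order $4$, yielding a claw of the second type), so that part of your plan is sound; but the additional small cases $q=5,7,9$ you propose to check by hand need no special treatment once the center question is settled, and the torus/Remark \ref{manyprimes} route you call the ``cleanest'' is unnecessary and, as you yourself observe, founders on uniformity in $q$. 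With the two repairs --- proving $H\cong\mathrm{SL}(2,q)$ always, and disposing of $q=3$ by inspection --- your argument collapses to the paper's proof.
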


	\begin{proof}$ \psu(3,q)$ contains a subgroup $H\cong {\rm{SL}}(2,q).$ If $q\neq 3$, then $H$ is not solvable and $Z(H)$ has order 2. But then $P^*(H)$ contains a claw by Proposition \ref{nocenter}. If $q=3$ the conclusion follows from the fact that every element of $\psu(3,3)$ of order 2 is contained in 4 different cyclic subgroups of order 4.
	\end{proof}

\begin{theorem}\label{psl2}
	Let $G=
\psl(2, q)$ and let $d=(q-1,2).$
Then $P^*(G)$ is claw-free if and only if  each of the two  numbers $(q \pm 1)/d$ is either a prime power or the product of some prime with a prime-power.
\end{theorem}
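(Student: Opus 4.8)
The plan is to reduce the claw-freeness of $P^*(\psl(2,q))$ to an elementary condition on the divisor lattices of the orders of the maximal cyclic subgroups of $G$. I would first record the standard subgroup structure of $G=\psl(2,q)$, where $q=p^f$ and $d=(q-1,2)$: every non-trivial element is either unipotent of order $p$ or semisimple of order dividing $(q-1)/d$ or $(q+1)/d$, there are no ``mixed'' elements, and the maximal cyclic subgroups are exactly the subgroups of order $p$ (each self-maximal, since $p\nmid (q\pm1)/d$ forces any element of order divisible by $p$ to have order exactly $p$), the split tori (cyclic of order $(q-1)/d$) and the non-split tori (cyclic of order $(q+1)/d$). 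I would then prove the elementary lemma that a positive integer $n$ is a prime power or the product of a prime and a prime power if and only if the divisor lattice of $n$ has no antichain of size $3$; consequently, for such $n$ every divisor is again of this form, and inside a cyclic group of order $n$ the subgroups containing a fixed subgroup admit no antichain of size $3$. By the dichotomy of claw types recorded at the start of Section~\ref{prelim}, this says precisely that $P^*(C)$ is claw-free for a cyclic group $C$ of order $n$ exactly when $n$ is \emph{nice} in this sense.

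For necessity, $G$ contains cyclic subgroups of orders $(q-1)/d$ and $(q+1)/d$; if either order were not nice, a generator $b$ of such a subgroup together with three pairwise incomparable subgroups would give a claw of the first type $(b\to a_1,a_2,a_3)$, contradicting the claw-freeness of $P^*(G)$.

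For sufficiency I assume both $(q\pm1)/d$ are nice. Since the order of every element divides $p$ or one of $(q\pm1)/d$, it is nice, so no $\langle b\rangle$ contains three pairwise incomparable subgroups; hence there are no claws of the first type. For a claw of the second type $(a_1,a_2,a_3\to b)$ I would exploit that $b\in\langle a_i\rangle$ forces every $a_i\in C_G(b)$, so the whole claw lies inside $C_G(b)$; it then suffices to show that no $C_G(b)$ contains three pairwise incomparable cyclic subgroups sharing $\langle b\rangle$. If $b$ is unipotent, $C_G(b)$ is an elementary abelian $p$-group whose only cyclic subgroup containing $\langle b\rangle$ is $\langle b\rangle$ itself. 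If $b$ is semisimple and not an involution (in particular for every semisimple $b$ when $q$ is even), then $b$ is regular and $C_G(b)$ is the unique maximal torus through $b$, a nice cyclic group, so the within-cyclic lemma applies.

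The delicate case, and the main obstacle, is an involution $t$ with $q$ odd. Here $C_G(t)$ is dihedral of order $q-\epsilon$ with $\epsilon=(-1)^{(q-1)/2}$; since $(q-\epsilon)/2$ is even, $t$ is the central involution and so lies in the cyclic rotation subgroup $C_n$ of order $(q-\epsilon)/2=(q\mp1)/d$. The point to verify carefully is that \emph{every} cyclic subgroup of a dihedral group containing its central involution is contained in the rotation subgroup (a reflection generates an order-$2$ group that cannot contain the central rotation), so once again all three pendants $a_i$ lie in the single nice cyclic group $C_n$, and the within-cyclic lemma excludes the claw. Establishing this involution-centralizer structure and the reduction of all cyclic overgroups of $t$ into one cyclic torus is the technical heart of the proof; everything else is bookkeeping with divisor lattices.
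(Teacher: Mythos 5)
Your proof is correct, but it takes a genuinely different route from the paper's. The paper's argument is a short reduction via Huppert's partition theorem \cite[II, Satz 8.5]{hu}: the conjugates of a Sylow $p$-subgroup $K$ and of the two cyclic subgroups $H_1$, $H_2$ of orders $(q\pm 1)/d$ partition $G$; since adjacent vertices of $P^*(G)$ lie in a common cyclic subgroup, hence in a common component of the partition, every claw (being connected) lies entirely inside one conjugate of $K$, $H_1$ or $H_2$, so $P^*(G)$ is claw-free if and only if $P^*(K)$, $P^*(H_1)$ and $P^*(H_2)$ are; the first holds because $K$ is elementary abelian, and the other two are governed by Remark \ref{manyprimes} together with the remark on cyclic groups of order $p^aq$ --- your ``nice number''/antichain-of-size-3 lemma is exactly this criterion for cyclic groups, correctly stated and proved. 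Your argument replaces the partition theorem by the general observation that a second-type claw $(a_1,a_2,a_3\to b)$ lies in $C_G(b)$, followed by a case analysis of centralizers in $\psl(2,q)$ (elementary abelian for unipotent $b$, a single cyclic torus for regular semisimple $b$, dihedral for an involution when $q$ is odd), while first-type claws are excluded because every element order divides a nice number. What the paper's route buys is uniformity and brevity: both claw types are dispatched simultaneously and no involution-centralizer structure is needed. What your route buys is that the centralizer reduction is a completely general principle valid in any group, and the structural facts you invoke (Dickson-type centralizer data for $\psl(2,q)$) are arguably more widely known than the partition statement. One small point to patch: when $(q-\epsilon)/2=2$ (i.e.\ $q\in\{3,5\}$) the involution centralizer is a Klein four group, so ``the central involution'' is not unique and there is no canonical rotation subgroup; but in that case the argument trivializes, since no cyclic subgroup properly contains $\langle t\rangle$ and hence no pendant can exist.
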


\begin{proof}
Let $\Omega$ be the set of positive integers that are either prime-power or of the form $p^nq,$ where $p$ and $q$ are distinct primes and $n\in \mathbb N.$ Moreover let
$z_1=(q-1)/d$ and $z_2=(q+1)/d.$ Assume that $q=p^t,$ with $p$ a prime, and let $K$ be a Sylow $p$-subgroup of $G.$ By \cite[II, Satz 8.5]{hu}
$G$ contains a cyclic subgroup $H_1$ of order $z_1$ and a cyclic subgroup $H_2$ of order $z_2$ and the conjugates of $K, H_1, H_2$ give rise to a partition of $G.$ Thus $P^*(G)$ is claw-free if and only $P^*(K), P^*(H_1)$ and $P^*(H_2)$ are claw-free. On the other hand $P^*(K)$ is claw-free since $K$ is an elementary abelian $p$-group, while, by Remark
 \ref{manyprimes}, for $i\in \{1,2\},$ $P^*(H_i)$ is claw-free if and only if $z_i\in \Omega.$
\end{proof}

\section{Proof of Theorem \ref{fiveprime}}

\begin{proof}[Proof of Theorem \ref{fiveprime}]
Assume that $P^*(G)$ is claw-free. If $G$ is solvable, then $\pi(G)\leq 4$ by Theorem \ref{fourprime}. So assume that $G$ is almost simple. Let $S=\soc(G).$ By Theorem \ref{simplenoclaw}, $S=\psl(2,q),$ with $q=p^f$ for a suitable prime $p.$ Let $d=(q-1,2).$ Since $|S|=q(q-1)(q+1)/d,$ it follows from Theorem \ref{simplenoclaw}
that $\pi(S)\leq 5.$ Recall that the order of $G/S$ divides $2f.$
Assume that $r$ is a prime dividing $|G|$ but not $|S|$.
Then $r$ divides $f$. Let $u=f/r$ and let $g$ be an element of order $r$ in $f$. Then $C_S(g)\cong \psl(2,p^u)$ and $G$ contains a subgroup $H$ isomorphic to $\psl(2,p^u)\times C_r.$
However $|\psl(2,p^u)|$ is divisible by at least two different primes and is coprime with $r$, so $P^*(H)$ contains a claw by Theorem \ref{inde}.
\end{proof}

\section{Acknowledgements}
Pallabi Manna expresses gratitude to the Department of Atomic Energy (DAE), India for providing funding (in the form of post-doctoral fellowship) during this work. Santanu Mandal acknowledges the infrastructure provided by VIT Bhopal University, India. 
\section{Statements and Declarations} \textbf{Conflict of interest:} On behalf of all authors, the corresponding author states that there
is no conflict of interest.
\section{Data Availability}
Data sharing is not applicable to this article as no data were created or analyzed in this study.

\end{document}